\numberwithin{equation}{section}
\newcommand{\CC}{\mathbb {C}}
\newtheorem{thm}{Theorem}
\newtheorem{proposition}{Proposition}
\newtheorem{lemma}{Lemma}
\newtheorem{corollary}{Corollary}
\title[Product of Volterra type integral  and composition  operators on weighted Fock spaces]
 {Product of Volterra type integral  and composition operators on weighted Fock spaces}
\author [Tesfa  Mengestie]{Tesfa  Mengestie }
\address{Department of Mathematical Sciences\\
Norwegian University of Science and Technology (NTNU)\\
 NO- 7491 Trondheim, Norway}
\email{mengesti@math.ntnu.no /mengestie@yahoo.com}
\subjclass[2000]{30E05, 46E22}
 \keywords{Fock space, Volterra operator, composition operator, boundedness, compactness, Schatten class, norm, essential norm}
\begin{document}
\begin{abstract}
We characterize the  bounded, compact, and Schatten class product of
Volterra type integral and composition operators acting between
weighted Fock spaces. Our results are expressed in terms of certain
Berezin type integral transforms on the complex plane $\CC$. We also
estimate the  norms and  essential norms of these operators in terms
of the integral transforms. All our results are valid for weighted
composition operators when acting between the class of weighted Fock
spaces considered.
\end{abstract}
 \maketitle

\section{Introduction}
Given a space of functions $\mathscr{H}$ of holomorphic functions on
$\CC,$ we define the Volterra type integral operator on
$\mathscr{H}$ induced by a holomorphic symbol $g$ by
 \begin{equation*}
 V_gf(z)= \int_0^z f(w)g'(w) dw.
\end{equation*}
Questions about boundedness, compactness, and other  operator
theoretic properties of $V_g$ expressed in terms of function
theoretic conditions on $g$ have been a subject of high interest
since introduced by Pommerenke \cite{Pom} in 1997. The operator
$V_g$ has  in particular attracted a lot of interest following the
works of Aleman and Siskakis \cite{Alsi1,Alsi2} on Hardy and
Bergmann spaces.  For more information,  we refer to the surveys in
\cite{Alman,Si} and the references therein. The Volterra type
integral operator $V_g$ has an interesting  relation with the
multiplication operator $Mg(f)= gf$ by $M_g(f)= f(0)g(0)+ V_g(f)+
I_g(f),$ where $I_g$ is the Volterra companion integral operator
given by
\begin{equation}
\label{campanian} I_gf(z)= \int_0^z f'(w)g(w)dw.
\end{equation}

 Let $\psi$ be entire function and  $C_\psi f= f(\psi)$ be
the induced composition operator on  space of analytic functions on
$\CC.$ We define the product of Volterra type integral and
composition operators induced by the  pair of symbols $(g,\psi)$ by
\begin{equation}
\label{define} V_{(g, \psi)}f(z)= \int_0^z f(\psi(w))g'(w)dw.\
\end{equation}
If $\psi(z)=z,$ then these operators are just  the usual Volterra
type integral operators $V_g.$ As will be seen latter the study of
$V_{(g,\psi)}$   reduce to studying the  composition operator
$C_\psi$ when $|g'(z)/(1+|z|)|$ behaves like a constant for all $z$.
Several authors have studied operators of this kind\cite{LIS, SLI,
AS, XZHU,EW}. $\breve{\text{Z}}$.
 $\breve{\text{C}}$u$\breve{\text{c}}$kovi$\acute{\text{c}}$ and R.
Zhao \cite{ZZHH, ZZH} characterized the bounded and compact weighted
composition operators between different weighted Bergman spaces and
different Hardy spaces in terms of the generalized Berezin
transform. Similar results were also obtained in  \cite{UEKI}  for
the same operator acting on the classical Fock space $F_1^2$.

 In this paper, we present analogous  results for product of Volterra
type integral and composition operator $V_{(g,\psi)}$ when it acts
between different weighted Fock spaces. By modifying all the results
stated for $V_{(g,\psi)},$ one could also obtain similar results for
the  operators
\begin{equation*}
 C_{(\psi,g)} f(z):= \int_{0}^z f(\psi(w)) (g(\psi(w))'dm(w).
\end{equation*}

 We recall that for $0<p<\infty $  and $\alpha >0, $ the
Fock space $F_\alpha^p$ consists of entire functions $f$ for which
\begin{equation*}
\|f\|_{(p,\alpha)}^p=  \frac{p\alpha}{2\pi}\int_{\CC} |f(z)|^p
e^{-\frac{p \alpha}{2}|z|^2} dm(z) <\infty
\end{equation*} where $dm$ is the Lebesgue measure.  In particular, $F_\alpha^2$ is a
reproducing kernel Hilbert space with kernel and normalized kernel
functions respectively $K_{(w,\alpha)}(z)= e^{\alpha \overline{w}z}$
and $k_{(w,\alpha)}(z)= e^{-\alpha|w|^2/2+ \alpha\overline{w}z}.$

 Our results will be expressed in terms of the Berezin type integral transform
\begin{eqnarray*}
B_{(\psi, \alpha)}(|g|^p)(w)&=& \int_{\CC} e^{\frac{p\alpha}{2}\big(
2\Re \langle \psi(z), \
w\rangle-|z|^2-|w|^2\big)}\frac{|g'(z)|^p}{(1+|z|)^p} dm(z),
\end{eqnarray*} where $\langle.,.\rangle$ is the standard inner product in
the complex plan $\CC$.
\subsection{bounded and compact $V_{(g,\psi)}$} We now state our  first main
result.
\begin{thm}\label{bounded}
Let $0<p\leq q<\infty$ and $\psi$ be an entire function. Then
$V_{(g,\psi)}: F_\alpha^p \to F_\alpha^q$ is \begin{enumerate}
 \item bounded if and only if
$ B_{(\psi, \alpha)}(|g|^q) \in L^\infty(\CC, dm)$.
Moreover\footnote{The notation $U(z)\lesssim V(z)$ (or equivalently
$V(z)\gtrsim U(z)$) means that there is a constant $C$ such that
$U(z)\leq CV(z)$ holds for all $z$ in the set in question, which may
be a Hilbert space or  a set of complex numbers. We write
$U(z)\simeq V(z)$ if both $U(z)\lesssim V(z)$ and $V(z)\lesssim
U(z)$.}
\begin{equation}
\label{norm1estimate} \|V_{(g,\psi)}\| \simeq \Big( \sup_{w\in
\CC}B_{(\psi, \alpha)}(|g|^q)(w)\Big)^{1/q}.
\end{equation}
\item compact if and only if $\lim_{|z|\to \infty}
B_{(\psi,\alpha)}(|g|^q) (z)= 0$.
\end{enumerate}
\end{thm}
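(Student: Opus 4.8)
The plan is to differentiate and thereby reduce the boundedness and compactness of $V_{(g,\psi)}$ to a Fock--Carleson measure problem. First I would use the well-known norm equivalence
\[
\|F\|_{(q,\alpha)}^q \simeq |F(0)|^q + \int_{\CC}\frac{|F'(z)|^q}{(1+|z|)^q}\,e^{-\frac{q\alpha}{2}|z|^2}\,dm(z),
\]
valid for every entire $F$. Applying it to $F=V_{(g,\psi)}f$, and using $F(0)=0$ and $F'(z)=f(\psi(z))g'(z)$, gives
\[
\|V_{(g,\psi)}f\|_{(q,\alpha)}^q \simeq \int_{\CC}|f(\psi(z))|^q\,\frac{|g'(z)|^q}{(1+|z|)^q}\,e^{-\frac{q\alpha}{2}|z|^2}\,dm(z) = \int_{\CC}|f(u)|^q\,e^{-\frac{q\alpha}{2}|u|^2}\,d\mu(u),
\]
where $\mu$ is the pushforward under $\psi$ of the measure $d\nu(z)=\frac{|g'(z)|^q}{(1+|z|)^q}\,e^{\frac{q\alpha}{2}(|\psi(z)|^2-|z|^2)}\,dm(z)$. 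Hence $V_{(g,\psi)}\colon F_\alpha^p\to F_\alpha^q$ is bounded (resp.\ compact) if and only if $\mu$ is a (resp.\ vanishing) $(p,q)$ Fock--Carleson measure, with $\|V_{(g,\psi)}\|^q$ comparable to its Carleson norm.

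Next I would invoke the description of Fock--Carleson measures in the range $0<p\le q<\infty$: $\mu$ is $(p,q)$-Carleson iff $\sup_{w\in\CC}\mu(D(w,r))<\infty$ for some (equivalently every) $r>0$, with the Carleson norm comparable to this supremum, and $\mu$ is a vanishing Carleson measure iff $\mu(D(w,r))\to 0$ as $|w|\to\infty$ (here $D(w,r)$ is the Euclidean disk). The necessity half follows by testing on the normalized kernels $k_{(w,\alpha)}$, for which $\|k_{(w,\alpha)}\|_{(p,\alpha)}=1$ and $|k_{(w,\alpha)}(u)|^q e^{-\frac{q\alpha}{2}|u|^2}=e^{-\frac{q\alpha}{2}|u-w|^2}\gtrsim 1$ on $D(w,r)$; the sufficiency half uses a lattice decomposition of $\CC$ together with the pointwise bound $|f(u)|^q e^{-\frac{q\alpha}{2}|u|^2}\lesssim\int_{D(u,r)}|f(v)|^q e^{-\frac{q\alpha}{2}|v|^2}\,dm(v)$, obtained by applying the sub-mean-value inequality to the holomorphic function $v\mapsto f(v)e^{-\alpha\overline{u}v+\alpha|u|^2/2}$, and the continuous embedding $F_\alpha^p\subseteq F_\alpha^q$ (valid since $p\le q$).

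To bring in the Berezin transform I would record the identity $2\Re\langle\psi(z),w\rangle-|z|^2-|w|^2=(|\psi(z)|^2-|z|^2)-|\psi(z)-w|^2$, which shows at once that
\[
B_{(\psi,\alpha)}(|g|^q)(w)=\int_{\CC}e^{-\frac{q\alpha}{2}|\psi(z)-w|^2}\,d\nu(z)=\int_{\CC}e^{-\frac{q\alpha}{2}|u-w|^2}\,d\mu(u)\simeq\|V_{(g,\psi)}k_{(w,\alpha)}\|_{(q,\alpha)}^q.
\]
Since $e^{-\frac{q\alpha}{2}|u-w|^2}\ge e^{-\frac{q\alpha}{2}r^2}$ on $D(w,r)$ this yields $\mu(D(w,r))\lesssim B_{(\psi,\alpha)}(|g|^q)(w)$, while summing the Gaussian over the annuli $D(w,(k+1)r)\setminus D(w,kr)$ and covering each by $O(k^2)$ unit disks gives the reverse comparison $B_{(\psi,\alpha)}(|g|^q)(w)\lesssim\sup_v\mu(D(v,r))$ and, with a little more care, the implication $\mu(D(\cdot,r))\to 0\Rightarrow B_{(\psi,\alpha)}(|g|^q)\to 0$. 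Combining this with the first two paragraphs gives $\|V_{(g,\psi)}\|^q\simeq\sup_w\mu(D(w,r))\simeq\sup_w B_{(\psi,\alpha)}(|g|^q)(w)$, which is (i) together with \eqref{norm1estimate}. For (ii): if $V_{(g,\psi)}$ is compact then $\|V_{(g,\psi)}k_{(w,\alpha)}\|_{(q,\alpha)}\to 0$ as $|w|\to\infty$, because $k_{(w,\alpha)}\to0$ uniformly on compacta while staying norm-bounded, so $B_{(\psi,\alpha)}(|g|^q)(w)\to0$; conversely, if $B_{(\psi,\alpha)}(|g|^q)\to0$ then $V_{(g,\psi)}$ is bounded and $\mu$ is a vanishing Carleson measure, and for any norm-bounded $(f_n)$ in $F_\alpha^p$ with $f_n\to0$ uniformly on compacta one splits $\int_{\CC}|f_n(u)|^q e^{-\frac{q\alpha}{2}|u|^2}d\mu(u)$ over $|u|\le R$ and $|u|>R$: the first part tends to $0$ for fixed $R$, the second is $\lesssim\sup_{|u|>R}\mu(D(u,r))\cdot\sup_n\|f_n\|_{(p,\alpha)}^q$, small for large $R$; this gives $\|V_{(g,\psi)}f_n\|_{(q,\alpha)}\to0$, hence compactness via the standard fact that a bounded operator between Fock spaces is compact iff it carries norm-bounded, uniformly-on-compacta-null sequences to norm-null ones.

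I expect the main obstacle to be the Fock--Carleson estimates of the second paragraph: obtaining the pointwise reproduction-type bound in the presence of the Gaussian weight (which forces the holomorphic twist used above) and, more importantly, tracking constants through the lattice argument carefully enough that the full norm estimate \eqref{norm1estimate}, and not merely boundedness, comes out. The case $0<p<1$, where $F_\alpha^p$ is not locally convex, also requires the weak-compactness arguments in the compactness part to be replaced by normal-families reasoning.
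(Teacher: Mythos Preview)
Your proposal is correct and follows essentially the same route as the paper: reduce via the Littlewood--Paley-type norm equivalence to a Fock--Carleson measure problem for the pull-back measure, obtain necessity by testing on normalized kernels $k_{(w,\alpha)}$, and obtain sufficiency (and the norm estimate) from the local sub-mean-value inequality combined with the embedding $F_\alpha^p\subseteq F_\alpha^q$; compactness then goes through the standard weakly-null-sequence criterion with the split over $\{|w|\le r\}$ and $\{|w|>r\}$. The only cosmetic difference is that you pass through the intermediate disk-average quantity $\mu(D(w,r))$ and then compare it to $B_{(\psi,\alpha)}(|g|^q)(w)$ via an annular decomposition, whereas the paper bypasses that step by proving directly (its Lemma~2) that $\int_{D(w,1)}e^{\frac{q\alpha}{2}|z|^2}\,d\mu_{(q,\alpha)}(z)\lesssim B_{(\psi,\alpha)}(|g|^q)(w)$ and then applying Fubini with the identity $\chi_{D(z,1)}(w)=\chi_{D(w,1)}(z)$; the underlying estimates are the same.
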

The conditions both in (i) and (ii) are independent of the
 exponent $p$ apart from the fact that $p$ should not be exceeding
 $q$. It means that if there exists a $p>0$ for which  $ V_{(g,
\psi)}$ is bounded (compact) from $ F_\alpha^p $ to $ F_\alpha^q$,
then it is also  bounded (compact) for every other $p \leq q$.

 It is often difficult to
determine whether a concrete operator on a function space possesses
properties such as boundedness, compactness or Schatten class
membership. For reproducing kernel Hilbert spaces, a fruitful
strategy has been to test whether the operator theoretic properties
could  be determined by its action on the kernel functions alone. In
general there is no reason why this should hold but many important
results are known to be interpreted as examples of this property
which we call the reproducing kernel thesis property. Our results
above present another example of the thesis property.  The
boundedness and compactness of $V_{(g, \psi)}$ are respectively
equivalent to
\begin{equation*}\sup_{z\in \CC} \|V_{(g, \psi)}k_{(z,\alpha)}\|_{(q,\alpha)}<\infty \
\text{and}\ \lim_{|z|\to \infty}
 \|V_{(g, \psi)}k_{(z,\alpha)}\|_{(q,\alpha)}=0.
 \end{equation*}

A natural question is whether there exists an interplay between the
two symbols  $g$ and $\psi$  in inducing bounded and compact
operators $V_{(g,\psi)}$. We first observe that if $g'\neq 0,$ then
by the classical
 Liouville's theorem  the function $g$ can not decay in
any way. This forces that
\begin{eqnarray*}
B_{(\psi, \alpha)}(|g|^p)(w) =\int_{\CC}e^{\frac{p\alpha}{2}\big(
|\psi(z)|^2-|z|^2 -|\psi(z)-w|^2\big)}\frac{|g'(z)|^p}{(1+|z|)^p}
dm(z)
\end{eqnarray*} is   bounded    only when  $\psi(z)= az+b$ with
$|a|\leq 1$. Moreover if $|a|=1,$ then $b=0, $ and  compactness is
achieved only when $|a|<1.$ Combining this with  Proposition 3 in
\cite{CMS}, we get the following\footnote{The  Fock type spaces in
\cite{CMS} are defined in a slightly different way than ours. }.
\begin{corollary} Let $0<p\leq q<\infty,$ $g' \neq 0$ and $\psi$ be an entire function.
Then if $V_g\circ C_\psi= V_{(g,\psi)}: F_\alpha^p \to F_\alpha^q$
is
\begin{enumerate}
 \item  bounded, then  $C_\psi$ is bounded.
 \item  compact, then $C_\psi$ is compact.
\end{enumerate}
\end{corollary}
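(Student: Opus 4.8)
The plan is to feed the characterizations of Theorem~\ref{bounded} into the description of bounded and compact composition operators between weighted Fock spaces in \cite{CMS}. Assume first that $V_{(g,\psi)}=V_g\circ C_\psi\colon F_\alpha^p\to F_\alpha^q$ is bounded. By Theorem~\ref{bounded}(i) this is equivalent to $B_{(\psi,\alpha)}(|g|^q)\in L^\infty(\CC,dm)$, and if $V_{(g,\psi)}$ is compact then Theorem~\ref{bounded}(ii) gives $B_{(\psi,\alpha)}(|g|^q)(z)\to 0$ as $|z|\to\infty$. Writing the transform in the completed--square form
\[
B_{(\psi,\alpha)}(|g|^q)(w)=\int_{\CC}e^{\frac{q\alpha}{2}\left(|\psi(z)|^2-|z|^2-|\psi(z)-w|^2\right)}\frac{|g'(z)|^q}{(1+|z|)^q}\,dm(z),
\]
the task is to deduce from the boundedness (respectively the decay at infinity) of this quantity, together with $g$ being non-constant, that $\psi(z)=az+b$ with $|a|\le1$, where in addition $b=0$ when $|a|=1$ (respectively where $|a|<1$). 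This is precisely the observation recorded in the paragraph just before the statement. Granting it, the corollary is immediate from \cite[Proposition~3]{CMS}: for $0<p\le q<\infty$ the operator $C_\psi\colon F_\alpha^p\to F_\alpha^q$ is bounded precisely for such affine symbols and compact precisely when $|a|<1$ (constant symbols included). Thus a bounded $V_{(g,\psi)}$ forces $\psi$ of the type making $C_\psi$ bounded, giving (i), and a compact $V_{(g,\psi)}$ forces in addition $|a|<1$, giving (ii).

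It remains to indicate how I would prove the forcing step. Since $g'\not\equiv0$ it has only isolated zeros, so for a point $z_0$ with $g'(z_0)\neq0$ one may pick a radius $r=r(z_0)\in(0,1]$ with $D(z_0,r)$ free of zeros of $g'$ and $\sup_{z\in D(z_0,r)}|\psi(z)-\psi(z_0)|\le1$; subharmonicity of $|g'|^q$ then gives $\int_{D(z_0,r)}|g'|^q\,dm\ge\pi r^2|g'(z_0)|^q$. Restricting the defining integral to $D(z_0,r)$ and evaluating at $w=\psi(z_0)$, on that disc $|\psi(z)-w|\le1$ while $|\psi(z)|^2-|z|^2$ differs from $|\psi(z_0)|^2-|z_0|^2$ only by a term of size $\lesssim|\psi(z_0)|+|z_0|$, so
\[
B_{(\psi,\alpha)}(|g|^q)(\psi(z_0))\ \gtrsim\ \frac{r(z_0)^2\,|g'(z_0)|^q}{(1+|z_0|)^q}\,\exp\!\left(\tfrac{q\alpha}{2}\left(|\psi(z_0)|^2-|z_0|^2\right)-C|z_0|-C|\psi(z_0)|\right).
\]
If $\psi$ grew faster than linearly the exponent would tend to $+\infty$ along a suitable sequence $z_0\to\infty$, and classical minimum modulus estimates — which prevent the non-constant entire function $g'$ from being small on entire large circles — rule out the polynomially small prefactor absorbing the exponential, a contradiction; hence $\psi(z)=az+b$ is at most linear. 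In the affine case the change of variable $u=az+b$ turns $B_{(\psi,\alpha)}(|g|^q)$ into a mild perturbation of the Berezin transform that controls $C_\psi$ itself, and boundedness forces $|a|\le1$ and then $b=0$ when $|a|=1$, while the decay condition forces $|a|<1$.

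The step I expect to be the genuine obstacle is this forcing of the affine structure of $\psi$. The crude disc estimate above handles the case in which $|g'|$ is not too small where the exponent is large; the real difficulty is the opposite one, where $|g'|$ is small precisely in the directions along which $|\psi(z)|^2-|z|^2$ blows up, and there one must instead exploit that $|g'|$ cannot be small on a whole circle, so that in the regions where $|g'|$ is large the integrand of $B_{(\psi,\alpha)}(|g|^q)$ again fails to decay — this is where the minimum modulus principle, and the explicit reduction in the affine case, do the real work. A secondary point requiring care is to quote \cite[Proposition~3]{CMS} in exactly the range $0<p\le q<\infty$ (with the common weight $\alpha$) used here.
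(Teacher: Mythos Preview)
Your overall architecture is exactly the paper's: invoke Theorem~\ref{bounded} to pass to the Berezin-type transform, use the observation in the paragraph immediately preceding the corollary to force $\psi(z)=az+b$ with the appropriate restriction on $|a|$, and then read off the conclusion from \cite[Proposition~3]{CMS}. You even cite that paragraph explicitly and then go beyond the paper by sketching a justification of the forcing step, which the paper simply asserts.

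There is, however, a genuine gap in part~(ii), and it is one the paper shares: the assertion that decay of $B_{(\psi,\alpha)}(|g|^q)(w)$ as $|w|\to\infty$ forces $|a|<1$ is false. Take $\psi(z)=z$ and $g(z)=z$, so $g'\equiv 1\neq 0$. Then $V_{(g,\psi)}f(z)=\int_0^z f(w)\,dw$ is the Volterra operator $V_g$ with linear symbol, which is compact on $F_\alpha^p\to F_\alpha^q$ by Corollary~\ref{cor1}(ii); equivalently,
\[
B_{(\psi,\alpha)}(|g|^q)(w)=\int_{\CC}\frac{e^{-\frac{q\alpha}{2}|z-w|^2}}{(1+|z|)^q}\,dm(z)\longrightarrow 0\quad(|w|\to\infty),
\]
since the Gaussian localizes near $z=w$. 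Yet $C_\psi$ is the identity on an infinite-dimensional space and is not compact. So the step ``the decay condition forces $|a|<1$'' in your affine analysis cannot be carried out, and indeed the stated conclusion~(ii) fails for this pair $(g,\psi)$. The weight $|g'(z)|/(1+|z|)$ can itself supply the decay when $|a|=1$, which your argument does not rule out. Part~(i) is unaffected by this, and your sketch there is along the right lines; but for (ii) the forcing step is not merely hard---it is wrong as stated.
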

On the other hand a bounded  $V_{(g,\psi)}$ does not necessarily
imply boundedness of the Volterra type integral operator  $V_g$.
This is because boundedness of the former  allows  $g$ to be any
entire function that grows more slowly than the exponential part of
the integrand in $B_{(\psi, \alpha)}(|g|^p)(w)$ while boundedness of
the latter forces $g$ to grow as a power function of at most degree
2, as seen below.  By setting $\psi(z)=z$ in the theorem, we
immediately get the following
 result of Constantin \cite{Olivia}.
 \begin{corollary}\label{cor1}
 Let $0<p\leq q<\infty$. Then $V_{g}: F_\alpha^p \to F_\alpha^q$ is \begin{enumerate}
 \item bounded if and only if $g(z)= az^2+bz+c,\ a, b, c\in \CC$.
\item compact if and only if $g(z)= az+b$.
\end{enumerate}
 \end{corollary}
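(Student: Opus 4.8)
The plan is to obtain this corollary as the special case $\psi(z)=z$ of Theorem \ref{bounded}, the only real work being to translate the condition "$B_{(z,\alpha)}(|g|^q)\in L^\infty(\CC,dm)$" (resp. "$B_{(z,\alpha)}(|g|^q)(z)\to 0$") into a statement purely about the polynomial degree of $g$. When $\psi(z)=z$ the Gaussian exponent collapses, $2\Re\langle z,w\rangle-|z|^2-|w|^2=-|z-w|^2$, so that
\[
B_{(z,\alpha)}(|g|^q)(w)=\int_{\CC}e^{-\frac{q\alpha}{2}|z-w|^2}\,\frac{|g'(z)|^q}{(1+|z|)^q}\,dm(z),
\]
which is a Gaussian average of the nonnegative function $\varphi(z):=|g'(z)|^q(1+|z|)^{-q}$.

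The key step is that boundedness (resp. decay at $\infty$) of this average is equivalent to boundedness (resp. decay at $\infty$) of $\varphi$ itself. One direction is immediate: $B_{(z,\alpha)}(|g|^q)(w)\le\frac{2\pi}{q\alpha}\sup_z\varphi(z)$, and if $\varphi(z)\to 0$ then splitting the integral over $\{|z|\le R\}$ and $\{|z|>R\}$ and using that the Gaussian mass over a fixed compact set tends to $0$ as its center recedes gives $B_{(z,\alpha)}(|g|^q)(w)\to 0$. For the reverse, since $g'$ is entire the function $|g'|^q$ is subharmonic, so the sub-mean value inequality over the unit disc $D(w,1)$ together with $1+|z|\simeq 1+|w|$ for $z\in D(w,1)$ yields $\varphi(w)\lesssim\int_{D(w,1)}e^{-\frac{q\alpha}{2}|z-w|^2}\varphi(z)\,dm(z)\le B_{(z,\alpha)}(|g|^q)(w)$; in particular $\sup_w B_{(z,\alpha)}(|g|^q)(w)\simeq\sup_z\varphi(z)$, and $B_{(z,\alpha)}(|g|^q)(z)\to 0$ forces $\varphi(z)\to 0$.

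With these equivalences in hand, Theorem \ref{bounded}(i) reduces to the condition $|g'(z)|\lesssim 1+|z|$ and Theorem \ref{bounded}(ii) to $|g'(z)|=o(1+|z|)$ as $|z|\to\infty$. Both are read off from Cauchy's estimates applied to $g'=\sum_{n\ge 0}a_nz^n$: one has $|a_n|\le r^{-n}\max_{|z|=r}|g'(z)|$. If $|g'(z)|\lesssim 1+|z|$, then $|a_n|\lesssim r^{1-n}\to 0$ for $n\ge 2$, so $g'$ is affine and $g(z)=az^2+bz+c$; conversely any such $g$ satisfies $|g'(z)|=|2az+b|\lesssim 1+|z|$. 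If instead $|g'(z)|=o(1+|z|)$, the same estimate gives $|a_n|=o(r^{1-n})$, which now also kills $a_1$, so $g'$ is constant and $g(z)=az+b$; conversely for such $g$ one has $\varphi(z)=|a|^q(1+|z|)^{-q}\to 0$.

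The only genuinely delicate point is the passage from the Gaussian average to the pointwise quantity — in particular verifying that the slowly varying weight $(1+|z|)^{-q}$ is harmless when combined with the sub-mean value inequality for $|g'|^q$, and that the escape-of-mass argument for the compactness statement is uniform. Everything else is a routine invocation of Theorem \ref{bounded} and elementary complex analysis; note in particular that, as in the remark following Theorem \ref{bounded}, the answer does not depend on $p$ beyond the hypothesis $p\le q$.
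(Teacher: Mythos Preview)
Your proposal is correct and follows essentially the same route as the paper: specialize Theorem~\ref{bounded} to $\psi(z)=z$, use subharmonicity of $|g'|^q$ over $D(w,1)$ together with $1+|z|\simeq 1+|w|$ there to get the pointwise lower bound $B_{(z,\alpha)}(|g|^q)(w)\gtrsim |g'(w)|^q/(1+|w|)^q$, and then read off the polynomial degree of $g$. The paper is terser, declaring the sufficiency ``immediate'' from Theorem~\ref{bounded} and not spelling out the escape-of-mass/dominated-convergence step for the compactness direction, whereas you supply those details; but the core argument is identical.
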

\begin{proof}
  By Theorem~\ref{bounded}, the
sufficiency of the conditions both in (i) and (ii) are immediate. We
shall sketch the necessity. If $D(w,1)=\{z\in \CC: |z-w|< 1\},$ then
\begin{equation}\label{voltnecess}
 B_{(\psi,\alpha)}(|g|^q)(w) \gtrsim \int_{
D(w,1)}\frac{|g'(z)|^q}{(1+|z|)^q} dm(z) \gtrsim
\frac{|g'(w)|^q}{(1+|w|)^q},
\end{equation} where the last inequality follows by subharmonicity.
 Assuming boundedness of $V_{(g,\psi)}$, \eqref{voltnecess} implies
$|g'(w)|\lesssim 1+|w|$ for all $w \in \CC,$ from which the desired
expression for $g$ follows. On the other hand, if $V_g$ is compact,
then  since $k_{(w,\alpha)} \to 0$ uniformly on compact subsets of
$\CC$ as $|w| \to \infty$ we see from relation \eqref{voltnecess}
that
\begin{equation*}\frac{|g'(w)|}{1+|w|} \to 0, \ \text{as} \ |w| \to
\infty.\end{equation*} This can happen only when $g$ is  a
polynomial of degree at most 1.\end{proof}
 Interestingly, many more $g's$ are
admissible  than those in the previous corollary if we  scale $\psi$
as $\psi(z)= \beta z$ with $|\beta|<1.$ More precisely, we get the
following whose proof is just immediate from the theorem.
\begin{corollary} Let
$\psi(z)= \beta z$ with $|\beta|<1$ and  $0<p\leq q<\infty$. Then
$V_{(g,\psi)}: F_\alpha^p \to F_\alpha^q$ is bounded for any $g$
such that
\begin{equation*}
|g(z)| \lesssim e^{\frac{\alpha \gamma}{2}|z|^2}
\end{equation*}for all  $z$ in $\CC$ and any $\gamma$ satisfying $\gamma+|\beta|^2<1.$
\end{corollary}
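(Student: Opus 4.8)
The plan is to deduce everything from Theorem~\ref{bounded}(i): it suffices to show that, under the stated hypotheses, the Berezin type transform $B_{(\psi,\alpha)}(|g|^q)$ is a bounded function on $\CC$.

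The first thing I would do is convert the growth condition on $g$ into one on $g'$. Applying the Cauchy estimate on the unit disc $D(z,1)$ and using $|g(\zeta)|\lesssim e^{\alpha\gamma|\zeta|^2/2}$ yields
\[
|g'(z)|\ \lesssim\ \sup_{|\zeta-z|\le 1}|g(\zeta)|\ \lesssim\ e^{\frac{\alpha\gamma}{2}(|z|+1)^2}\ \lesssim\ e^{\frac{\alpha\gamma}{2}|z|^2}\,e^{\alpha\gamma|z|}.
\]
The extra linear exponential is harmless at infinity: for any $\varepsilon>0$ there is a constant $C_\varepsilon$ with $e^{\alpha\gamma|z|}\le C_\varepsilon e^{\alpha\varepsilon|z|^2/2}$, so $|g'(z)|\lesssim e^{\alpha(\gamma+\varepsilon)|z|^2/2}$. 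Because $\gamma+|\beta|^2<1$ \emph{strictly}, I can fix $\varepsilon>0$ small enough that $\delta:=1-(\gamma+\varepsilon)-|\beta|^2>0$.

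Next I would substitute $\psi(z)=\beta z$ and this bound for $|g'|$ into the definition of $B_{(\psi,\alpha)}(|g|^q)$, discarding the factor $(1+|z|)^{-q}\le 1$, which only enlarges the integral. Writing $1-\gamma-\varepsilon=\delta+|\beta|^2$, this gives
\[
B_{(\psi,\alpha)}(|g|^q)(w)\ \lesssim\ e^{-\frac{q\alpha}{2}|w|^2}\int_{\CC}\exp\!\Big(\frac{q\alpha}{2}\big(2\Re(\beta z\overline{w})-(\delta+|\beta|^2)|z|^2\big)\Big)\,dm(z).
\]
I would then evaluate this Gaussian integral by completing the square, using $\int_{\CC}e^{-s|z|^2+2\Re(c\overline{z})}\,dm(z)=\frac{\pi}{s}e^{|c|^2/s}$ with $s=\frac{q\alpha}{2}(\delta+|\beta|^2)$ and $c=\frac{q\alpha}{2}\overline{\beta}\,w$. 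The $|w|^2$ contributions then combine into $\frac{q\alpha}{2}\big(\frac{|\beta|^2}{\delta+|\beta|^2}-1\big)|w|^2=-\frac{q\alpha\delta}{2(\delta+|\beta|^2)}|w|^2\le 0$, so $B_{(\psi,\alpha)}(|g|^q)(w)$ stays bounded (indeed it decays to $0$ as $|w|\to\infty$). By Theorem~\ref{bounded}(i), $V_{(g,\psi)}:F_\alpha^p\to F_\alpha^q$ is bounded.

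The only genuinely delicate point is the first step: the hypothesis controls $g$ whereas the transform involves $g'$, and the Cauchy estimate unavoidably produces a linear term in the exponent. Absorbing that term forces the auxiliary parameter $\varepsilon$, and it is precisely the strictness of the inequality $\gamma+|\beta|^2<1$ that leaves the slack needed to keep $\delta>0$; everything after that is the standard Fock space Gaussian computation.
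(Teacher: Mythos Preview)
Your argument is correct and follows the same route the paper intends: the paper simply states that the corollary is ``immediate from the theorem,'' meaning one applies Theorem~\ref{bounded}(i) and checks that $B_{(\psi,\alpha)}(|g|^q)\in L^\infty(\CC)$. You have carried out exactly that verification, supplying the details the paper omits---in particular the passage from the growth bound on $g$ to one on $g'$ via the Cauchy estimate, and the explicit Gaussian computation showing the transform is bounded (and even decays). The one point you flag as ``delicate,'' namely absorbing the linear exponential term produced by the Cauchy estimate at the cost of an arbitrarily small $\varepsilon$ in the quadratic exponent, is precisely what the strict inequality $\gamma+|\beta|^2<1$ is there to accommodate; the paper's terse ``immediate'' presumably takes this for granted.
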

Theorem~\ref{bounded} and all the subsequent results are valid for
the weighted
 composition operator $(u C_\psi) f(z)= u(z).f(\psi(z)) $ between the
Fock spaces as described  here where $u$ is an entire function on
$\CC.$  We only  have to replace
 the weight  $|g'(z)|/(1+|z|)$ by  $ |u(z)|$ to get the corresponding
 results. For $p=q=2$ and $\alpha=1$, the bounded and compact
 composition operators were described in \cite{UEKI} apart from missing the
 fact that $\psi$ can be nothing but linears.

 For the case where we map  larger weighted Fock spaces into smaller ones, we get the following
 stronger conditions as one would expect.
\begin{thm} \label{thsmall} Let $0<q<p<\infty$ and $\psi$ be an entire function. Then
the following are equivalent
\begin{enumerate}
 \item $V_{(g,\psi)}: F_\alpha^p \to F_\alpha^q$ is bounded.
  \item $V_{(g,\psi)}: F_\alpha^p \to F_\alpha^q$ is  compact.
  \item  $B_{(\psi,\alpha)}(|g|^q)\in L^{\frac{p}{p-q}}(\CC, dm)$. Moreover,
\begin{equation}
\label{normless} \|V_{(g,\psi)}\| \simeq \Big(\int_{\CC}
B^{p/(p-q)}_{(\psi,\alpha)}(|g|^q)(w)dm(w)\Big)^{(p-q)/p}
\end{equation}
\end{enumerate}
\end{thm}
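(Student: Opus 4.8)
The plan is to establish the cycle (iii)$\Rightarrow$(ii)$\Rightarrow$(i)$\Rightarrow$(iii), with the norm estimate \eqref{normless} extracted along the way. Since compactness trivially implies boundedness on these spaces, the only substantive implications are (iii)$\Rightarrow$(ii) and (i)$\Rightarrow$(iii); the bulk of the work — and the main obstacle — is the latter, where one must convert an \emph{a priori} $L^\infty$-type boundedness bound into genuine $L^{p/(p-q)}$ integrability of the Berezin transform.

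For (iii)$\Rightarrow$(ii), I would first recast the action of $V_{(g,\psi)}$ as an embedding problem. Using $\|V_{(g,\psi)}f\|_{(q,\alpha)}^q \simeq \int_\CC |f(\psi(z))|^q \frac{|g'(z)|^q}{(1+|z|)^q} e^{-\frac{q\alpha}{2}|z|^2}\, dm(z)$ — which follows from the pointwise estimate $|V_{(g,\psi)}f(z)| \lesssim (1+|z|)\sup_{w\in D(z,1)}|f(\psi(w))g'(w)|$ together with subharmonicity, exactly as in \eqref{voltnecess} — the problem becomes boundedness of the inclusion $F_\alpha^p \hookrightarrow L^q(\mu_{g,\psi})$, where $d\mu_{g,\psi}(z) = \frac{|g'(z)|^q}{(1+|z|)^q} e^{-\frac{q\alpha}{2}|z|^2}\,dm(z)$ pushed forward along $\psi$, tested against $e^{-q\alpha|w|^2/2}$. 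For $q<p$, the standard theory of Carleson measures for Fock spaces (covering $\CC$ by unit lattice balls, and using that $\|f\|_{F_\alpha^p}$ controls the $\ell^p$-norm of the averages of $|f|e^{-\alpha|\cdot|^2/2}$ over that lattice) shows this embedding is bounded, indeed compact, precisely when the averaged measure lies in $\ell^{p/(p-q)}$, which by subharmonicity is comparable to the continuous condition $B_{(\psi,\alpha)}(|g|^q) \in L^{p/(p-q)}(\CC,dm)$; compactness then follows because the tail contributions vanish. This also yields the upper bound in \eqref{normless}.

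For (i)$\Rightarrow$(iii), the hard direction, the strategy is a standard-but-delicate duality/testing argument. One tests $V_{(g,\psi)}$ against a carefully chosen family — either normalized reproducing kernels $k_{(w,\alpha)}$ or, better for the $p>q$ regime, random lacunary-type sums $f_\lambda = \sum_j \lambda_j r_j(t) k_{(a_j,\alpha)}$ over a separated lattice $\{a_j\}$ with Rademacher signs $r_j(t)$ — and applies Khinchine's inequality to the resulting $L^q$ integral. Averaging in $t$ converts the boundedness hypothesis $\|V_{(g,\psi)}f_\lambda\|_{(q,\alpha)} \lesssim \|f_\lambda\|_{(p,\alpha)} \simeq \|\lambda\|_{\ell^p}$ into an inequality of the form $\big\| (\,\mu_{g,\psi}(D(a_j,1))\,)_j \big\|_{\ell^{p/(p-q)}} \lesssim C$, by taking the supremum over $\lambda\in\ell^p$ and invoking the duality $(\ell^{p/q})^* = \ell^{p/(p-q)}$. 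Passing from the discretized sum back to the integral $\int_\CC B_{(\psi,\alpha)}^{p/(p-q)}(|g|^q)(w)\,dm(w)$ uses subharmonicity in one direction and the finite-overlap property of a slightly enlarged lattice cover in the other; this simultaneously gives the lower bound in \eqref{normless}. The main obstacle is organizing the Khinchine/duality bookkeeping cleanly — in particular controlling the off-diagonal interactions $\langle k_{(a_i,\alpha)}, k_{(a_j,\alpha)}\rangle$, which decay like $e^{-\alpha|a_i-a_j|^2/2}$ and are summable over a separated lattice, so that the random sum genuinely behaves like an $\ell^p$-normalized element of $F_\alpha^p$.
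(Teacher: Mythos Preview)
Your overall strategy is sound and would succeed, but it takes a genuinely different route from the paper in the hard direction (i)$\Rightarrow$(iii). The paper does not run a Khinchine/Rademacher duality argument; instead it observes that boundedness of $V_{(g,\psi)}$ is equivalent to the measure $\lambda_{(q,\alpha)}$ (defined by $d\lambda_{(q,\alpha)}(z) = e^{q\alpha|z|^2/2}\, d\mu_{(q,\alpha)}(z)$) being a $(p,q)$ Fock--Carleson measure, and then invokes Theorem~3.3 of Hu--Lv \cite{ZHXL} as a black box: that result already characterizes such measures by $\widetilde{\lambda_{(q,\alpha)}} \in L^{p/(p-q)}(\CC,dm)$ and supplies the norm equivalence. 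The identification $\widetilde{\lambda_{(q,\alpha)}} \simeq B_{(\psi,\alpha)}(|g|^q)$ is then a one-line computation. Your Rademacher test-function approach is essentially the Luecking-type machinery that lies \emph{underneath} results like \cite{ZHXL}, so you are re-proving the cited theorem rather than citing it; this buys self-containment at the cost of the off-diagonal bookkeeping you correctly flag. Similarly, for (iii)$\Rightarrow$(ii) the paper simply reuses the chain of inequalities leading to \eqref{normgen} from the proof of Theorem~\ref{bounded} (Lemma~\ref{pointwise} plus Fubini), then applies H\"older with exponent $p/(p-q)$ to the tail piece $I_{n2}$---quicker than rebuilding the Carleson embedding from a lattice cover.

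One local correction: your justification for $\|V_{(g,\psi)}f\|_{(q,\alpha)}^q \simeq \int_\CC |f(\psi(z))|^q \frac{|g'(z)|^q}{(1+|z|)^q} e^{-q\alpha|z|^2/2}\, dm(z)$ via a pointwise bound $|V_{(g,\psi)}f(z)| \lesssim (1+|z|)\sup_{w\in D(z,1)}|f(\psi(w))g'(w)|$ is not right---the integral defining $V_{(g,\psi)}f(z)$ runs from $0$ to $z$, not over a disc near $z$, so no such local estimate holds. The correct tool is Lemma~\ref{lemdfinite} applied with $(V_{(g,\psi)}f)'(z) = f(\psi(z))g'(z)$ and $V_{(g,\psi)}f(0)=0$.
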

It is interesting to  note that unlike condition (iii) of
Theorem~\ref{bounded} where we map smaller spaces into bigger ones,
condition (iii) above is expressed in terms of both exponents $p$
and $q$. When $\psi(z)=z,$ then the theorem simplifies to saying
that $V_g$ (for non constant $g$) is bounded or compact if and only
if $g'$ is a constant $q>2p/(p+2)$ and $g'=0$ for $q<2p/(p+2)$. This
is because by subharmonicity,
\begin{eqnarray}
\label{bddcomp}
 \int_{\CC}
B^{\frac{p}{p-q}}_{(\psi,\alpha)}(|g|^q)(w)dm(w) &\gtrsim&
\int_{\CC}\bigg( \int_{D(w,1)}
\bigg|\frac{k_{(w,\alpha)}(z)g'(z)}{(1+|z|)}\bigg|^q e^{-\frac{q\alpha}{2}|z|^2}dm(z)\bigg)^{\frac{p}{p-q}}dm(w)\nonumber\\
&\geq& \int_{\CC}\bigg|\frac{g'(w)}{1+|w|}\bigg|^{1/q} dm(w)
\end{eqnarray} from which the desired restrictions on $g,$  $p$ and
$q$ follow once we assume that the left-hand side of \eqref{bddcomp} is finite.\\
Observe that by setting $|g'(z)/(1+|z|)| \simeq 1,$
Theorem~\ref{thsmall} characterizes the bounded and compact
composition operators from $F_\alpha^p$ to $F_\alpha^q$ whenever
$p>q.$ This extends the result in \cite{CMS} where similar
conditions are given for compact and bounded $C_\psi:F_\alpha^p \to
 F_\alpha^q$ whenever  $0<p\leq q<\infty$. Those conditions in \cite{CMS} for the one variable setting
 could also  be obtained easily from Theorem~\ref{bounded}.  The
corresponding conditions in  Theorem~\ref{thsmall} can be easily
simplified to give that $C_\psi: F_\alpha^p \to F_\alpha^q$ is
bounded (compact) if and only if $\psi(z)= az+b$ with $|a|<1.$
 \subsection{Essential norm of $V_{(g,\psi)}$}The essential norm
$\|T\|_e$ of a bounded operator $T$ on a Banach space $\mathscr{H}$
is defined as the distance from $T$ to the space of compact
operators on $\mathscr{H}.$ We refer to \cite{ZZHH,ZZH,Shapiro,
UEKI,DV} for estimation of such norms  for  different operators on
Hardy space, Bergman space, $L^p$ and some Fock spaces. We get the
following estimate for $V_{(g,\psi)}.$
\begin{thm}\label{essentialnorm}
Let $1 <p\leq q< \infty$ and $\psi$ be an entire function. If
$V_{(g,\psi)}:F_\alpha^p \to F_\alpha^q$ is bounded,  then
\begin{equation}
\label{essential} \|V_{(g,\psi)}\|_e\simeq \bigg(\limsup_{|w| \to
\infty} B_{(\psi,\alpha)}(|g|^q)(w) \bigg)^{1/q}.
\end{equation}
\end{thm}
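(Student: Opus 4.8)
The plan is to combine the reproducing kernel test with an approximation of $V_{(g,\psi)}$ by compact operators obtained from dilations. The starting point is the norm identity already used in proving Theorem~\ref{bounded}: by the Littlewood--Paley type description of the Fock norm through the derivative, together with $(V_{(g,\psi)}h)'(z)=h(\psi(z))g'(z)$ and $V_{(g,\psi)}h(0)=0$, one has $\|V_{(g,\psi)}h\|_{(q,\alpha)}^q\simeq\int_{\CC}|h(\psi(z))|^q\,|g'(z)|^q(1+|z|)^{-q}e^{-q\alpha|z|^2/2}\,dm(z)$ for every entire $h$, and in particular $\|V_{(g,\psi)}k_{(w,\alpha)}\|_{(q,\alpha)}^q\simeq B_{(\psi,\alpha)}(|g|^q)(w)$. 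We may assume $g'\not\equiv0$, since otherwise $V_{(g,\psi)}=0$ and \eqref{essential} is trivial; then, as noted after Theorem~\ref{bounded}, boundedness forces $\psi(z)=az+b$ with $|a|\le1$, and if $a=0$ the operator has rank one, hence is compact, while $B_{(\psi,\alpha)}(|g|^q)(w)\to0$, so \eqref{essential} holds. Assume henceforth $0<|a|\le1$.

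For the lower estimate, recall $\|k_{(w,\alpha)}\|_{(p,\alpha)}=1$ and $k_{(w,\alpha)}\to0$ uniformly on compact subsets of $\CC$ as $|w|\to\infty$; since $1<p<\infty$, $F_\alpha^p$ is reflexive, so $k_{(w,\alpha)}\rightharpoonup0$ weakly and hence $\|Kk_{(w,\alpha)}\|_{(q,\alpha)}\to0$ for every compact $K:F_\alpha^p\to F_\alpha^q$. Consequently $\|V_{(g,\psi)}-K\|\ge\limsup_{|w|\to\infty}\|(V_{(g,\psi)}-K)k_{(w,\alpha)}\|_{(q,\alpha)}=\limsup_{|w|\to\infty}\|V_{(g,\psi)}k_{(w,\alpha)}\|_{(q,\alpha)}\simeq(\limsup_{|w|\to\infty}B_{(\psi,\alpha)}(|g|^q)(w))^{1/q}$, and taking the infimum over compact $K$ gives the lower bound in \eqref{essential}.

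For the upper estimate, for $0<r<1$ put $C_rh(z)=h(rz)$. A change of variables shows $C_r$ is bounded on $F_\alpha^p$, and the dominated convergence theorem (via the pointwise growth bound $|h(\zeta)|\lesssim\|h\|_{(p,\alpha)}e^{\alpha|\zeta|^2/2}$) shows that $C_r$ sends every bounded sequence converging to $0$ uniformly on compacts to a norm-null sequence; hence $C_r$ is compact on $F_\alpha^p$, so $V_{(g,\psi)}C_r$ is compact and $\|V_{(g,\psi)}\|_e\le\|V_{(g,\psi)}(I-C_r)\|$. Fixing $R>0$, split the integral representing $\|V_{(g,\psi)}(I-C_r)h\|_{(q,\alpha)}^q$ into $\{|z|\le R\}$ and $\{|z|>R\}$. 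On $\{|z|\le R\}$ the point $\psi(z)$ lies in a fixed compact set, and the Cauchy estimates together with the pointwise bound give $|h(\psi(z))-h(r\psi(z))|\le C_R(1-r)$ uniformly over the unit ball of $F_\alpha^p$; since $\int_{|z|\le R}|g'(z)|^q(1+|z|)^{-q}e^{-q\alpha|z|^2/2}\,dm(z)<\infty$, this contribution tends to $0$ as $r\to1$ for each fixed $R$. On $\{|z|>R\}$ we use $|h(\psi(z))-h(r\psi(z))|^q\lesssim|h(\psi(z))|^q+|(C_rh)(\psi(z))|^q$ together with $\|C_rh\|_{(p,\alpha)}\lesssim\|h\|_{(p,\alpha)}$ and argue exactly as in the boundedness part of Theorem~\ref{bounded}, but with $|g'(z)|^q(1+|z|)^{-q}$ replaced by its restriction to $\{|z|>R\}$; this bounds the contribution by $C(\sup_{w\in\CC}B_{(\psi,\alpha),R}(|g|^q)(w))\|h\|_{(p,\alpha)}^q$, where $B_{(\psi,\alpha),R}$ denotes the Berezin transform with its defining integral restricted to $\{|z|>R\}$ and $C$ depends only on $p,q,\alpha$.

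The crux, and the main obstacle, is to show that $\sup_{w}B_{(\psi,\alpha),R}(|g|^q)(w)$ can be made close to $\limsup_{|w|\to\infty}B_{(\psi,\alpha)}(|g|^q)(w)$ by taking $R$ large. For $|w|>\rho_0$ one has trivially $B_{(\psi,\alpha),R}(|g|^q)(w)\le\sup_{|w|>\rho_0}B_{(\psi,\alpha)}(|g|^q)(w)$. For $|w|\le\rho_0$, using $2\Re\langle\psi(z),w\rangle-|w|^2\le2\rho_0|\psi(z)|$ gives $B_{(\psi,\alpha),R}(|g|^q)(w)\le\int_{|z|>R}e^{q\alpha\rho_0|\psi(z)|-q\alpha|z|^2/2}|g'(z)|^q(1+|z|)^{-q}\,dm(z)$, and this is the tail of a convergent integral: testing the bounded operator $V_{(g,\psi)}$ on the kernels $K_{(\xi,\alpha)}$ with $|\xi|=s$ gives $\int_{\CC}e^{q\alpha\Re\langle\psi(z),\xi\rangle-q\alpha|z|^2/2}|g'(z)|^q(1+|z|)^{-q}\,dm(z)\lesssim e^{q\alpha s^2/2}$, and averaging over the circle $|\xi|=s$ turns the factor $e^{q\alpha\Re\langle\psi(z),\xi\rangle}$ into a modified Bessel factor bounded below by a constant times $e^{q\alpha s|\psi(z)|}(1+s|\psi(z)|)^{-1/2}$, whence $\int_{\CC}e^{q\alpha\rho_0|\psi(z)|-q\alpha|z|^2/2}|g'(z)|^q(1+|z|)^{-q}\,dm(z)<\infty$ for every $\rho_0$. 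Therefore $\sup_{|w|\le\rho_0}B_{(\psi,\alpha),R}(|g|^q)(w)=:\eta_R(\rho_0)\to0$ as $R\to\infty$. Collecting the estimates, $\|V_{(g,\psi)}\|_e^q\le\|V_{(g,\psi)}(I-C_r)\|^q\lesssim C_R(1-r)^q+\sup_{|w|>\rho_0}B_{(\psi,\alpha)}(|g|^q)(w)+\eta_R(\rho_0)$; letting $r\to1$ for fixed $R,\rho_0$, then $R\to\infty$, then $\rho_0\to\infty$ yields $\|V_{(g,\psi)}\|_e\lesssim(\limsup_{|w|\to\infty}B_{(\psi,\alpha)}(|g|^q)(w))^{1/q}$. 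Together with the lower bound this establishes \eqref{essential}. I expect the genuine difficulty to be exactly this localization of the Carleson-type embedding estimate so that the low-frequency part $\sup_{|w|\le\rho_0}B_{(\psi,\alpha),R}$ really vanishes as $R\to\infty$; the remaining pieces follow the familiar pattern for essential-norm computations on Fock spaces.
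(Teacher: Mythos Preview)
Your proof is correct, but for the upper estimate you choose a genuinely different approximating family from the one in the paper. The paper works with the Taylor tail operators $R_n f(z)=\sum_{k\ge n}p_k(z)$ and an auxiliary inequality $\|V_{(g,\psi)}\|_e\le\liminf_n\|V_{(g,\psi)}R_n\|$ (Lemma~\ref{generashipro}); it then applies the Carleson-type estimate \eqref{normgen} to $R_nf$ and splits the resulting integral in the \emph{$w$-variable}, so that the piece over $\{|w|>r\}$ is directly $\sup_{|w|>r}B_{(\psi,\alpha)}(|g|^q)(w)$ and the piece over $\{|w|\le r\}$ is handled by an explicit series bound (from \cite{UEKI2}) together with Stirling's formula. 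You instead approximate by the compact dilations $C_r$ and split in the \emph{$z$-variable} before invoking the Carleson estimate; this forces you to control the truncated transform $B_{(\psi,\alpha),R}$ uniformly on compacta in $w$, which you do via the Bessel-function averaging argument. Both decompositions reach the same endpoint; the paper's $w$-split avoids the extra localization step but imports an outside lemma and a Stirling computation, while your $z$-split is more self-contained and makes the role of the linear structure $\psi(z)=az+b$ explicit. The lower estimate you give is essentially identical to the paper's. One small remark: to pass from the Bessel bound $I_0(x)\gtrsim e^x(1+x)^{-1/2}$ to the integrability of $e^{q\alpha\rho_0|\psi(z)|-q\alpha|z|^2/2}|g'(z)|^q(1+|z|)^{-q}$ without the residual $(1+\rho_0|\psi(z)|)^{-1/2}$ factor, you should say explicitly that you apply the averaging with some $s>\rho_0$ and absorb the polynomial into $e^{q\alpha(s-\rho_0)|\psi(z)|}$.
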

For $p>1,$ the compactness condition in Theorem~\ref{bounded} could
be easily drawn from this relation  since the left-hand side
expression \eqref{essential} vanishes for compact $V_{(g,\psi)}.$ In
particular when $\psi(z)= z,$  a simple computation
 along with Corollary~\ref{cor1}  shows that
\begin{equation*}
 \| V_{g}\|_e \simeq \sqrt[q]{q}.\end{equation*}
 \subsection{Schatten Class $V_{(g,\psi)}$} Let us now turn to the Schatten class
 membership of $V_{(g,\psi)}$. We recall that a positive
operator $T$ on $F_\alpha^2$ belongs to the trace class if
\begin{equation*}\sum_{n=1}^\infty \langle Te_n, e_n\rangle <\infty \end{equation*} for some
orthonormal basis $(e_n)$ of $F_\alpha^2.$ If $0<p<\infty,$ a
bounded operator $T$  on $F_\alpha^2$  belongs to the Schatten class
$S_p$ if the positive operator  $(T^*T)^{p/2}$ is in the trace
class. We denote the $S_p$ norm of $T$ by $\|T\|_{S_p}.$
\begin{proposition} \label{prop}
Let $\mathscr{H}$ be any Hilbert space and $T$ be a bounded operator
from $F_\alpha^2$ to $\mathscr{H}.$ (i) If $p\geq 2$ and $T\in S_p,$
then
\begin{equation}
\label{necessary} \int_{\CC} \|Tk_{(z,\alpha)}\|_{\mathscr{H}}^p
 dm(z) <\infty.
\end{equation}
(ii)  If $0<p\leq 2$ and \eqref{necessary} holds, then $T\in S_p.$
\end{proposition}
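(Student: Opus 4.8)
The plan is to exploit the standard dictionary between Schatten membership of $T$ and summability of the eigenvalue-type data of the positive operator $T^*T$, together with the averaging/sampling estimates available in Fock spaces via the (normalized) reproducing kernels $k_{(z,\alpha)}$. The key elementary fact is that $\|Tk_{(z,\alpha)}\|_{\mathscr H}^2 = \langle T^*T k_{(z,\alpha)}, k_{(z,\alpha)}\rangle$ is precisely the Berezin transform $\widetilde{T^*T}(z)$ of the positive operator $A:=T^*T$ on $F_\alpha^2$. So both parts of the proposition reduce to two well-known principles about a positive operator $A$ on $F_\alpha^2$: (a) if $A\in S_{p/2}$ with $p/2\ge 1$, then $\widetilde A\in L^{p/2}(\CC,dm)$ up to a constant, because the Berezin transform is (essentially) a contraction-type map on Schatten classes when $p/2\ge1$; and (b) conversely, if $\widetilde A\in L^{p/2}(\CC,dm)$ with $0<p/2\le1$, then $A\in S_{p/2}$. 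Here I am using $T\in S_p \iff (T^*T)^{p/2}\in S_1 \iff T^*T\in S_{p/2}$.

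For part (i) ($p\ge 2$, so $s:=p/2\ge1$): I would first recall that for a trace-class positive operator one has $\operatorname{tr} A = \frac{\alpha}{\pi}\int_\CC \widetilde A(z)\,dm(z)$ (the Berezin transform integrates against normalized Lebesgue measure to give the trace). For $s>1$ one passes to powers: by the operator Jensen / Hansen–Pedersen inequality applied to the convex function $t\mapsto t^s$ and the unital positive map $X\mapsto \langle X k_{(z,\alpha)},k_{(z,\alpha)}\rangle$, one gets $\widetilde{A}(z)^s \le \widetilde{A^s}(z)$ pointwise. Integrating and using the trace formula for $A^s\in S_1$ yields
\[
\int_\CC \widetilde A(z)^s\,dm(z)\ \le\ \int_\CC \widetilde{A^s}(z)\,dm(z)\ \simeq\ \operatorname{tr}(A^s)=\|A\|_{S_s}^s<\infty .
\]
Since $\widetilde A(z)=\|Tk_{(z,\alpha)}\|_{\mathscr H}^2$ and $s=p/2$, this is exactly \eqref{necessary}. (Alternatively one can bypass Hansen–Pedersen by writing $A=\sum_n\lambda_n\langle\cdot,f_n\rangle f_n$ with $\{f_n\}$ orthonormal eigenvectors, noting $\widetilde A(z)=\sum_n\lambda_n|\langle k_{(z,\alpha)},f_n\rangle|^2$ is a convex combination-type sum, and applying the integral Minkowski / interpolation inequality together with $\int_\CC|\langle k_{(z,\alpha)},f\rangle|^2dm(z)=\frac{\pi}{\alpha}\|f\|^2$ for each unit vector $f$.)

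For part (ii) ($0<p\le 2$, so $s=p/2\le1$): the strategy is discretization. Fix an $r$-lattice $\{z_j\}$ in $\CC$ — a maximal $r$-separated set — which for $r$ small enough is a sampling/covering family for $F_\alpha^2$; the finite-overlap covering by the disks $D(z_j,r)$ is the combinatorial input. The sub-mean-value property for $|h|^2$ with $h\in F_\alpha^2$ gives, for the unit vectors $k_{(z,\alpha)}$,
\[
\sum_j \|Tk_{(z_j,\alpha)}\|_{\mathscr H}^{2}\ \lesssim\ \frac{1}{r^2}\sum_j\int_{D(z_j,r)}\|Tk_{(z,\alpha)}\|_{\mathscr H}^{2}\,dm(z)\ \lesssim\ \int_\CC \|Tk_{(z,\alpha)}\|_{\mathscr H}^{2}\,dm(z),
\]
and more generally, raising to the power $s\le1$ (where $\sum a_j^s$ behaves subadditively, so passing between the continuous and discrete sums is a one-line estimate using $s\le 1$), one obtains $\sum_j \|Tk_{(z_j,\alpha)}\|_{\mathscr H}^{p}\lesssim \int_\CC\|Tk_{(z,\alpha)}\|_{\mathscr H}^{p}dm(z)<\infty$. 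Then one invokes the standard criterion: if $\{k_{(z_j,\alpha)}\}$ arises from a lattice (hence, after normalization, forms a frame-type system for $F_\alpha^2$) and $\{\|Tk_{(z_j,\alpha)}\|_{\mathscr H}\}\in \ell^p$, then $T\in S_p$. Concretely, write $T=T\,(\text{frame reconstruction operator})$, i.e. factor $T$ through the analysis operator $h\mapsto\{\langle h,k_{(z_j,\alpha)}\rangle\}$ from $F_\alpha^2$ into $\ell^2$, which is bounded, followed by a (diagonal-like) operator into $\mathscr H$ whose singular values are controlled by $\{\|Tk_{(z_j,\alpha)}\|_{\mathscr H}\}$; since a diagonal operator with $\ell^p$ diagonal is in $S_p$ and $S_p$ is an ideal, the conclusion follows.

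The main obstacle is the converse direction (ii): one must produce an honest operator-theoretic factorization of $T$ through a Schatten-class building block from the mere $\ell^p$-summability of the lattice samples, and this requires the frame/atomic-decomposition machinery for $F_\alpha^2$ (boundedness of the analysis and synthesis operators associated to a sufficiently dense lattice) rather than a soft pointwise estimate. The part (i) direction is comparatively routine once the Berezin-trace identity and the operator convexity inequality are in hand.
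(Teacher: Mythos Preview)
Your treatment of part (i) coincides with the paper's: both establish the trace identity $\operatorname{tr}(A)\simeq\int_{\CC}\widetilde A(z)\,dm(z)$ for positive $A$ (the paper derives it from the Hilbert--Schmidt computation \eqref{Hilbert-Schmidt}) and then apply the operator-Jensen inequality $\widetilde A(z)^{s}\le\widetilde{A^{s}}(z)$ for $s=p/2\ge 1$ to bound $\int_{\CC}\|Tk_{(z,\alpha)}\|_{\mathscr H}^{p}\,dm(z)$ by $\operatorname{tr}\big((T^*T)^{p/2}\big)$.

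For part (ii) you take a genuinely different and considerably heavier route. The paper simply \emph{reverses} the argument of part (i): for $0<s=p/2\le 1$ the map $t\mapsto t^{s}$ is operator concave, so the Jensen inequality flips to $\widetilde{A^{s}}(z)\le\widetilde A(z)^{s}$, and integrating gives
\[
\operatorname{tr}\big((T^*T)^{p/2}\big)\ \simeq\ \int_{\CC}\big\langle (T^*T)^{p/2}k_{(z,\alpha)},k_{(z,\alpha)}\big\rangle\,dm(z)\ \lesssim\ \int_{\CC}\|Tk_{(z,\alpha)}\|_{\mathscr H}^{p}\,dm(z)<\infty.
\]
No lattice, no frame machinery, no factorization is needed; the two halves of the proposition become a single symmetric argument. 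Your discretization approach can be made to work and has the virtue of being more constructive, but it costs the atomic decomposition of $F_\alpha^2$ and the $S_p$ ideal property, whereas the paper's proof needs only the one-line concavity bound you already used (in the convex direction) for part (i).

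There is also a small gap in your discretization step: you invoke ``the sub-mean-value property for $|h|^2$ with $h\in F_\alpha^2$'' to control $\|Tk_{(z_j,\alpha)}\|_{\mathscr H}^2$ by its average over $D(z_j,r)$, but $z\mapsto\|Tk_{(z,\alpha)}\|_{\mathscr H}^2=\widetilde{T^*T}(z)$ is not of the form $|h(z)|^2$ for a holomorphic $h$, so subharmonicity does not apply directly. One repairs this by using the uniform Lipschitz-type estimate $\|k_{(z,\alpha)}-k_{(w,\alpha)}\|_{(2,\alpha)}\to 0$ as $|z-w|\to 0$ (uniform on compacta), which makes the Berezin transform continuous enough to compare values at lattice points with local averages; but this is extra work that the paper's concavity argument avoids entirely.
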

It is  shown in \cite{HOLDW} that the converse to the two statements
above  fail to hold for Hankel operators on the Hardy space  $H^2$.
The interest is now whether the converse still hold for the product
of Volterra type integral and composition  operators under
consideration. It turns out that this is indeed the case (see,
Theorem~\ref{Schatten}).  In particular, $T$ belongs to the
Hilbert--Schmidt class if and only if for any orthonormal basis
$(e_n)$ in $\mathscr{H}$,
\begin{eqnarray}
\label{Hilbert-Schmidt} \|T\|_{S_2}^2&=& \sum_{n=1}^\infty
\|T^*e_n\|_{(2,\alpha)}^2\simeq\sum_{n=1}^\infty \int_{\CC}
T^*e_n(z)\overline{T^*e_n(z)}e^{-\alpha|z|^2}dm(z)\nonumber\\
&=&  \int_{\CC} \sum_{n=1}^\infty |\langle
TK_{(z,\alpha)},e_n\rangle|^2 e^{-\alpha|z|^2}dm(z)\\
&=&\int_{\CC}\|Tk_{(z,\alpha)}\|_{\mathscr{H}}^2dm(z)<\infty.\nonumber
\end{eqnarray}
If $T$ is any positive operator in the trace class of $ F_\alpha^2,$
then by the above
\begin{eqnarray*}
tr(T)= \|T^{1/2}\|_{S_2}^2\simeq \int_{\CC}
\|T^{1/2}k_{(z,\alpha)}\|_{(2,\alpha)}^2 dm(z)=\int_{\CC} \langle
Tk_{(z,\alpha)}, k_{(z,\alpha)}\rangle dm(z).
\end{eqnarray*}
We recall that  $T$ belongs to the Schatten class $S_p$ if and only
if $(T^*T)^{p/2}$ belongs to the trace class. Thus
\begin{eqnarray*}
tr((T^*T)^{p/2})&\simeq&\int_{\CC} \langle
(T^*T)^{p/2}k_{(z,\alpha)}, k_{(z,\alpha)}\rangle dm(z)\gtrsim
\int_{\CC}\|Tk_{(z,\alpha)}\|_{\mathscr{H}}^p dm(z),
\end{eqnarray*} for $p\geq
2$ and the inequality is reversed for $0<p\leq 2.$ In particular
when $T= V_{(g, \psi)},$ we have
\begin{equation*}\label{necess}
\int_{\CC}\|V_{(g,\psi)}k_{(z,\alpha)}\|_{(2,\alpha)}^p dm(z)\simeq
\int_{\CC} B^{p/2}_{(\psi,\alpha)}(|g|^2)(w)dm(w),
\end{equation*} which gives the proofs of the necessity for $p\geq 2$ and
the sufficiency for $0<p\leq 2$ of our next theorem.
\begin{thm}\label{Schatten}
Let $0<p<\infty$ and $\psi$ be an entire function. Then a bounded
map $V_{(g,\psi)}: F_\alpha^2\to F_\alpha^2$ belongs to $S_p$ if and
only if $B_{(\psi,\alpha)}(|g|^2) \in L^{p/2}(\CC,dm)$.
\end{thm}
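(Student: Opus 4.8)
The plan is first to isolate what is not already contained in the discussion preceding the statement. That discussion establishes the necessity of the condition $B_{(\psi,\alpha)}(|g|^2)\in L^{p/2}(\CC,dm)$ when $p\ge 2$ and its sufficiency when $0<p\le 2$. So only two implications remain: sufficiency when $p>2$, and necessity when $0<p<2$. I would treat both by reducing $V_{(g,\psi)}$ to a Toeplitz operator on $F_\alpha^2$.

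For the reduction, observe that $V_{(g,\psi)}f(0)=0$ and $(V_{(g,\psi)}f)'(z)=f(\psi(z))g'(z)$, so by the Littlewood--Paley type description of the Fock norm, $\|h\|_{(2,\alpha)}^2\simeq|h(0)|^2+\int_{\CC}|h'(z)|^2(1+|z|)^{-2}e^{-\alpha|z|^2}\,dm(z)$, one obtains
\[
\|V_{(g,\psi)}f\|_{(2,\alpha)}^2\simeq\int_{\CC}|f(\psi(z))|^2\frac{|g'(z)|^2}{(1+|z|)^2}e^{-\alpha|z|^2}\,dm(z)=\langle T_\mu f,f\rangle ,
\]
where $T_\mu$ is the Toeplitz operator on $F_\alpha^2$ with symbol the positive measure $\mu=\psi_{*}\!\big((1+|z|)^{-2}|g'(z)|^2e^{-\alpha|z|^2}\,dm(z)\big)$. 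Thus $V_{(g,\psi)}^{*}V_{(g,\psi)}$ and $T_\mu$ are comparable positive operators, so by the min--max principle they have comparable eigenvalue sequences, and hence $V_{(g,\psi)}\in S_p$ iff $(V_{(g,\psi)}^{*}V_{(g,\psi)})^{1/2}\in S_p$ iff $T_\mu\in S_{p/2}$. Moreover a change of variables identifies the Berezin transform of $T_\mu$ with our integral transform, $\widetilde{T_\mu}(w)=\langle T_\mu k_{(w,\alpha)},k_{(w,\alpha)}\rangle\simeq\|V_{(g,\psi)}k_{(w,\alpha)}\|_{(2,\alpha)}^2=B_{(\psi,\alpha)}(|g|^2)(w)$. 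So, writing $s=p/2$, the theorem is equivalent to the assertion $T_\mu\in S_s\iff\widetilde{T_\mu}\in L^s(\CC,dm)$.

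For $p>2$, that is $s>1$, I would argue by interpolation: $T_\mu$ is bounded (since $V_{(g,\psi)}$ is, by hypothesis), so $\widetilde{T_\mu}\in L^\infty$, while $\widetilde{T_\mu}\in L^1$ forces $T_\mu\in S_1$ because $tr\,T_\mu\simeq\int_{\CC}\widetilde{T_\mu}\,dm$ for the positive operator $T_\mu$; complex interpolation on the Schatten scale then yields $T_\mu\in S_s$ from $\widetilde{T_\mu}\in L^s$, exactly as in the standard theory of Toeplitz operators on Fock spaces, and unwinding the reduction gives $V_{(g,\psi)}\in S_p$. For $0<p<2$, that is $0<s<1$, the naive ``test on the normalized kernels'' argument is unavailable (the inequality $\sum_k|\langle Te_k,\sigma_k\rangle|^s\le\|T\|_{S_s}^s$ fails for $s<1$), so I would discretize. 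Fix $r>0$ and an $r$-lattice $\{z_k\}\subset\CC$ (so $\CC=\bigcup_kD(z_k,r)$ with bounded multiplicity), and put $d\lambda(u)=e^{\alpha|u|^2}\,d\mu(u)$, so that $\widetilde{T_\mu}(w)=\int_{\CC}e^{-\alpha|w-u|^2}\,d\lambda(u)$. On one hand, $\widetilde{T_\mu}(w)\gtrsim e^{-4\alpha r^2}\lambda(D(z_k,r))$ for $w\in D(z_k,r)$, whence $\sum_k\lambda(D(z_k,r))^s\lesssim\int_{\CC}\widetilde{T_\mu}^s\,dm$; on the other hand, splitting the Gaussian over a tiling subordinate to $\{D(z_k,r)\}$ and using $(\sum_ka_k)^s\le\sum_ka_k^s$ for $s\le1$ together with $\int_{\CC}e^{-s\alpha|w-z_k|^2}\,dm(w)\lesssim1$, one gets $\int_{\CC}\widetilde{T_\mu}^s\,dm\lesssim\sum_k\lambda(D(z_k,r))^s$. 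Since the Fock-space analogue of Luecking's trace-ideal criterion characterizes $T_\mu\in S_s$ precisely by $\{\lambda(D(z_k,r))\}_k\in\ell^s$, these comparisons give $T_\mu\in S_s\iff\widetilde{T_\mu}\in L^s(\CC,dm)$, i.e.\ $\iff B_{(\psi,\alpha)}(|g|^2)\in L^{p/2}(\CC,dm)$; translating back finishes the case $0<p<2$.

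I expect the $0<p<2$ direction to be the main obstacle. Its delicate part is the necessity half of the discrete criterion for $T_\mu\in S_s$ with $0<s<1$: this cannot be obtained by kernel-testing and requires the Luecking-type decomposition of $\mu$ into nearly orthogonal pieces. The one point beyond quoting the Fock-space Toeplitz theory is the estimate $\int_{\CC}\widetilde{T_\mu}^s\,dm\lesssim\sum_k\lambda(D(z_k,r))^s$ for all $s\le1$, which hinges on the rapid (Gaussian) decay of the factor $e^{-\alpha|w-u|^2}$ in $\widetilde{T_\mu}(w)=\int e^{-\alpha|w-u|^2}\,d\lambda(u)$; this decay has no counterpart on Bergman-type spaces, and it is exactly what makes the Berezin transform condition both necessary and sufficient for the whole range $0<p<\infty$ here.
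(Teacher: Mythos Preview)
Your reduction to a Toeplitz operator is exactly the paper's route. The paper equips $F_\alpha^2$ with the equivalent Littlewood--Paley inner product
\[
\langle f,h\rangle = f(0)\overline{h(0)}+\int_{\CC}\frac{f'(z)\overline{h'(z)}}{(1+|z|)^2}e^{-\alpha|z|^2}\,dm(z)
\]
so as to obtain the \emph{exact} identity $V_{(g,\psi)}^*V_{(g,\psi)}=T_\mu$ (with the adjoint taken in that inner product and $\mu$ the pushforward of $|g'|^2(1+|z|)^{-2}e^{-\alpha|z|^2}\,dm$ under $\psi$), whereas you compare quadratic forms and invoke the min--max principle; both are fine and yield the same conclusion. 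The paper then finishes in one line by quoting the Isralowitz--Zhu theorem \cite{JIKZ} that $T_\mu\in S_s$ if and only if $\widetilde\mu\in L^s(\CC,dm)$ for every $0<s<\infty$; nothing is split into cases.

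What you add beyond the paper is a partial re-derivation of that cited theorem. Your discretization for $0<s\le1$ is correct and is essentially the Isralowitz--Zhu/Luecking argument (and you do in the end invoke ``the Fock-space analogue of Luecking's trace-ideal criterion'', which is precisely \cite{JIKZ}). The one soft spot is your treatment of $s>1$: ``complex interpolation on the Schatten scale'' is not justified as written, because you would have to exhibit $\mu\mapsto T_\mu$ as a bounded linear map between a genuine interpolation couple, and the condition $\widetilde{T_\mu}\in L^s$ does not by itself define such a scale on the domain side. This is not a fatal gap --- the lattice criterion you already cite for $s<1$ holds for all $s>0$, and for $s\ge1$ the equivalence $\|\widetilde{T_\mu}\|_{L^s}\simeq\|\{\lambda(D(z_k,r))\}\|_{\ell^s}$ follows from Young's (or Minkowski's) inequality in place of the sublinearity trick $(\sum a_k)^s\le\sum a_k^s$ --- but as written the interpolation step is a sketch, not a proof. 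The paper sidesteps this by citing \cite{JIKZ} uniformly.
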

In particular for  the Volterra type integral operator  $V_g,$ we
obtain the following.
\begin{corollary}
\label{cor3} Let $V_g$ be a compact operator on $F_\alpha^2$.
 If $0<p\leq 2,$ then   $V_g$ belongs to $S_p$ if and only if  $
g$ is a constant function. On the other hand,  $V_g$ belongs to
$S_p$ for all $p> 2.$
\end{corollary}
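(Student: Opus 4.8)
The plan is to deduce everything from Theorem~\ref{Schatten} applied with $p=q=2$ and $\psi(z)=z$, so that the relevant transform becomes
\begin{equation*}
B_{(\mathrm{id},\alpha)}(|g|^2)(w)=\int_{\CC}e^{-\frac{\alpha}{2}|z-w|^2}\frac{|g'(z)|^2}{(1+|z|)^2}\,dm(z).
\end{equation*}
Since $V_g$ is assumed compact on $F_\alpha^2$, Corollary~\ref{cor1}(ii) forces $g(z)=az+b$, hence $g'\equiv a$ is constant and $|g'(z)|^2/(1+|z|)^2 = |a|^2/(1+|z|)^2$. Thus the whole problem reduces to deciding, for constant numerator, whether the function
\begin{equation*}
w\longmapsto B_{(\mathrm{id},\alpha)}(|g|^2)(w)=|a|^2\int_{\CC}\frac{e^{-\frac{\alpha}{2}|z-w|^2}}{(1+|z|)^2}\,dm(z)
\end{equation*}
lies in $L^{p/2}(\CC,dm)$.

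First I would record the two-sided size estimate $B_{(\mathrm{id},\alpha)}(|g|^2)(w)\simeq |a|^2/(1+|w|)^2$ for all $w\in\CC$. The upper bound: split the integral over $D(w,|w|/2)$ and its complement; on the first region $(1+|z|)\gtrsim (1+|w|)$ so that piece is $\lesssim (1+|w|)^{-2}$, while on the complement the Gaussian decays like $e^{-c|w|^2}$, which is $o((1+|w|)^{-2})$. The lower bound follows by restricting the integral to the unit disk $D(w,1)$, on which $e^{-\frac{\alpha}{2}|z-w|^2}\gtrsim 1$ and $(1+|z|)^{-2}\gtrsim (1+|w|)^{-2}$; this is exactly the subharmonicity-type estimate \eqref{voltnecess} specialized to $\psi=\mathrm{id}$. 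Consequently $B_{(\mathrm{id},\alpha)}(|g|^2)\in L^{p/2}(\CC,dm)$ if and only if $a=0$, or $a\ne 0$ and $\int_{\CC}(1+|w|)^{-p}\,dm(w)<\infty$; passing to polar coordinates, the latter integral behaves like $\int_1^\infty r^{1-p}\,dr$, which converges precisely when $p>2$.

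Putting these together: for $0<p\le 2$, membership $V_g\in S_p$ holds iff $B_{(\mathrm{id},\alpha)}(|g|^2)\in L^{p/2}$ iff $a=0$, i.e. $g$ is constant (note a constant $g$ gives $V_g=0\in S_p$ trivially, consistent with "compact"). For $p>2$, if $g$ is nonconstant the estimate gives $B_{(\mathrm{id},\alpha)}(|g|^2)\simeq (1+|w|)^{-2}\in L^{p/2}$, so $V_g\in S_p$; and if $g$ is constant then $V_g=0\in S_p$ as well. Hence $V_g\in S_p$ for all $p>2$ whenever $V_g$ is compact, which is the claim.

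I expect the only delicate point to be the \emph{upper} estimate $B_{(\mathrm{id},\alpha)}(|g|^2)(w)\lesssim (1+|w|)^{-2}$, since one must check uniformity of the constant as $|w|\to\infty$ and handle the region where $z$ is small but $w$ is large; the clean way is the two-region split above, using that $\int_{\CC}e^{-\frac{\alpha}{2}|z-w|^2}\,dm(z)$ is a finite constant independent of $w$. Everything else is a routine application of Theorem~\ref{Schatten} together with the already-established form of compact $V_g$ from Corollary~\ref{cor1}.
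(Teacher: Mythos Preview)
Your proof is correct and follows essentially the same strategy as the paper: reduce via Corollary~\ref{cor1} to $g'\equiv a$, then apply Theorem~\ref{Schatten} and test whether $w\mapsto |a|^2(1+|w|)^{-2}$ lies in $L^{p/2}(\CC,dm)$. Two small differences worth recording: for $0<p<2$ the paper shortcuts via the Schatten inclusion $S_p\subset S_2$ and only checks the Hilbert--Schmidt case $p=2$, whereas you treat all $p$ uniformly through the two-sided pointwise estimate; and for $p>2$ the paper writes down only the \emph{lower} bound $B(w)\gtrsim(1+|w|)^{-2}$ (which by itself does not give $B\in L^{p/2}$) and implicitly leans on \cite{Olivia}, while your two-region split supplies the matching \emph{upper} bound and makes the argument self-contained. (A cosmetic point: with $p=2$ the exponent in $B_{(\mathrm{id},\alpha)}(|g|^2)$ is $-\alpha|z-w|^2$, not $-\tfrac{\alpha}{2}|z-w|^2$; this does not affect your estimates.)
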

\begin{proof}
 For $p\geq 2,$ this result was also  proved in \cite{Olivia}. Now we
observe that it in fact follows immediately from
Theorem~\ref{Schatten}. For (i), we observe that it is suffices to
show that there are no nontrivial Hilbert--Schmidt Volterra type
integral operators. The rest will follow from the monotonicity
property of the Schatten classes. To this end, if
 $V_g$ is a compact operator, then by
Corollary~\ref{cor1}, $g'= C,$  a constant. On the other hand by
subharmonicity
\begin{eqnarray*}
\int_{\CC} B^{p/2}_{(\psi,\alpha)}(|g|^2)(w) dm(w)&\simeq&
\int_{\CC}\bigg(\int_{\CC} \frac{C^2
e^{-|z-w|^2}}{(1+|z|)^2}dm(z)\bigg)^{p/2}dm(w)\\
&\gtrsim& \int_{\CC} \frac{C^P}{(1+|w|)^p}dm(w).\end{eqnarray*}
Theorem~\ref{Schatten} ensures that if $V_g$ belongs to $S_p,$ then
the above integrals should converge. But for  $p=2$, this holds only
when $C=0.$ The integral converges for all $p>2.$\end{proof}  By
combining Theorem~\ref{Schatten} with Funbini's Theorem, it is also
easily seen that $ V_{(g,\psi)}$ is a Hilbert--Schmidt operator if
and only if
\begin{equation*}
\int_{\CC} \frac{|g'(z)|^2}{(1+|z|)^2}
e^{\alpha|\psi(z)|^2-\alpha|z|^2} dm(z)<\infty.
\end{equation*}
 As remarked earlier, all our results are valid for weighted composition
 operator $uC_\psi.$  For such operators,
 Theorem~\ref{Schatten} can be simplified further.
 \begin{corollary}\label{corscahtten}
Let $0<p<\infty$ and $\psi$ and  $u$ be entire functions. Then a
bounded map $uC_\psi: F_\alpha^2 \to F_\alpha^2$ belongs to $S_p$ if
and only if $\psi(z)= az+b$ and
 \begin{equation*}
 \int_{\CC} |u(z)|^p e^{\frac{p\alpha}{2}\big((|a|^2-1)|z|^2+2\Re\langle az,b\rangle\big)} dm(z) <\infty
 \end{equation*} for some $a, b\in \CC$ and $|a|<1$.
 \end{corollary}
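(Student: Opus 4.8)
The plan is to derive Corollary~\ref{corscahtten} as a direct specialization of Theorem~\ref{Schatten} to the weighted composition operator $uC_\psi$, i.e.\ to the case where the weight $|g'(z)|/(1+|z|)$ is replaced by $|u(z)|$, and then to extract the constraint that $\psi$ must be linear. First I would recall, as noted after Theorem~\ref{bounded} and again after Theorem~\ref{Schatten}, that all results hold verbatim for $uC_\psi$ with the substitution described; thus $uC_\psi:F_\alpha^2\to F_\alpha^2$ is in $S_p$ if and only if
\begin{equation*}
w\longmapsto \int_{\CC} e^{\alpha\big(2\Re\langle\psi(z),w\rangle-|z|^2-|w|^2\big)}|u(z)|^2\,dm(z)
\end{equation*}
belongs to $L^{p/2}(\CC,dm)$. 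Rewriting the exponent as $\alpha\big(|\psi(z)|^2-|z|^2-|\psi(z)-w|^2\big)$ exactly as in the displayed computation preceding the Corollary block, the Berezin transform becomes a convolution-type average of the positive weight $|u(z)|^2 e^{\alpha(|\psi(z)|^2-|z|^2)}$ against the Gaussian $e^{-\alpha|\psi(z)-w|^2}$.

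The next step is the linearity reduction. Since $uC_\psi\in S_p$ forces boundedness, and since for $uC_\psi$ boundedness of $uC_\psi$ is governed by the same Berezin transform being in $L^\infty$ (Theorem~\ref{bounded}), the argument already used in the paper — namely that $|\psi(z)|^2-|z|^2$ appearing in the exponent cannot be dominated unless $\psi(z)=az+b$ with $|a|\le1$, and that $S_p\subset$ (compact operators) forces the strict inequality $|a|<1$ (via Theorem~\ref{bounded}(ii) together with $b=0$ being impossible when $|a|=1$ unless the operator degenerates) — shows $\psi(z)=az+b$ with $|a|<1$. I would phrase this by invoking Corollary~\ref{cor1}'s companion reasoning: if the relevant Berezin transform is even in $L^{p/2}$, it is in particular finite, hence bounded on bounded sets, hence $\psi$ is affine with contraction constant $|a|<1$.

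Finally, with $\psi(z)=az+b$, $|a|<1$, I would compute $|\psi(z)|^2-|z|^2=(|a|^2-1)|z|^2+2\Re\langle az,b\rangle+|b|^2$, absorb the constant $|b|^2$ and all fixed Gaussian normalizations, and perform the $w$-integration: integrating $e^{-\alpha|az+b-w|^2}$ over $w\in\CC$ contributes only a finite constant (independent of $z$), and more carefully, after raising to the power $p/2$ and applying Fubini/Minkowski exactly as in the proof of Corollary~\ref{cor3}, one sees that the $L^{p/2}$ membership of the Berezin transform is equivalent to
\begin{equation*}
\int_{\CC}|u(z)|^p e^{\frac{p\alpha}{2}\big((|a|^2-1)|z|^2+2\Re\langle az,b\rangle\big)}\,dm(z)<\infty,
\end{equation*}
which is the asserted condition. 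The main obstacle I anticipate is the passage from ``$B\in L^{p/2}$'' to the clean integral over $z$ alone: one must check that raising the $z$-integral to the power $p/2$ and swapping the order of integration does not lose anything, i.e.\ that the Gaussian $w$-kernel $e^{-\alpha|az+b-w|^2}$ behaves like an approximate identity with both upper and lower $L^1$-in-$w$ bounds uniform in $z$. For $p\ge2$ this is a direct Minkowski-inequality estimate; for $0<p<2$ one uses the reverse direction, localizing the $w$-integral to a unit disc around $az+b$ and bounding the integrand below by subharmonicity of $z\mapsto |u(z)|^2e^{\alpha(|\psi(z)|^2-|z|^2)}$ — the same device used in \eqref{bddcomp} and in the proof of Corollary~\ref{cor3}. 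Once that two-sided comparison is in hand, the equivalence is immediate.
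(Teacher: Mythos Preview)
Your approach differs from the paper's in an essential way, and the paper's route sidesteps exactly the obstacle you flag. The paper does not attempt to unwind the double integral
\[
\int_{\CC}\Big(\int_{\CC}|u(z)|^2|k_{(w,\alpha)}(\psi(z))|^2e^{-\alpha|z|^2}\,dm(z)\Big)^{p/2}dm(w)
\]
directly. Instead it passes to the adjoint: since $uC_\psi\in S_p$ if and only if $(uC_\psi)^*\in S_p$, and since a short reproducing-kernel computation gives
\[
(uC_\psi)^*k_{(w,\alpha)}=\overline{u(w)}\,e^{-\alpha|w|^2/2}K_{(\psi(w),\alpha)},
\]
one has $\|(uC_\psi)^*k_{(w,\alpha)}\|_{(2,\alpha)}=|u(w)|\,e^{\frac{\alpha}{2}(|\psi(w)|^2-|w|^2)}$, a \emph{pointwise} expression in $w$ with no inner integral at all. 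The $S_p$ criterion therefore collapses immediately to
\[
\int_{\CC}|u(w)|^p e^{\frac{p\alpha}{2}(|\psi(w)|^2-|w|^2)}\,dm(w)<\infty,
\]
and the affine form $\psi(z)=az+b$ with $|a|<1$ (forced by compactness) gives the stated condition. The paper itself remarks, right after this proof, that this adjoint argument cannot be carried over to general $V_{(g,\psi)}$ --- precisely because $V_{(g,\psi)}^*k_{(w,\alpha)}$ has no such closed form.

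Your direct route is plausible but not complete as written. The two-sided comparison you need --- that $\|B_{(\psi,\alpha)}(|u|^2)\|_{L^{p/2}}$ is equivalent to the single $z$-integral --- is not ``exactly as in the proof of Corollary~\ref{cor3}'': that proof gives only the one-sided subharmonicity lower bound. For $0<p<2$, where $p/2<1$ and Young/Minkowski are unavailable, securing the matching upper bound would require the full Fock--Carleson equivalences from \cite{ZHXL} or \cite{JIKZ} (Berezin transform $\simeq$ averaging function in every $L^s$), combined with the sub-mean-value property of $z\mapsto|u(z)|^2e^{\alpha(|\psi(z)|^2-|z|^2)}$. That can be made to work, but it is considerably more effort than the adjoint shortcut; the paper's method buys you a one-line reduction in place of a genuine harmonic-analysis estimate.
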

\emph{Proof.} By Theorem~\ref{Schatten}, $ uC_\psi \in S_p$ if and
 only if
 \begin{eqnarray*}
\int_{\CC} \bigg(\int_{\CC} |u(z)|^2| k_{(w,\alpha)} (\psi(z))|^2
e^{-\alpha|z|^2}dm(z)\bigg)^{p/2} dm(w)\\
\simeq \int_{\CC} \|uC_\psi k_{(w,\alpha)}\|_{(2,\alpha)}^p dm(w)
<\infty.
 \end{eqnarray*}
 On the other hand, $ uC_\psi \in S_p$ if and only if $(uC_\psi)^* \in
 S_p,$ and $\|uC_\psi\|_{S_p} = \|(uC_\psi)^*\|_{S_p} $. From this it follows that
\begin{equation}
\label{norm} \int_{\CC} \|uC_\psi k_{(w,\alpha)}\|_{(2,\alpha)}^p
dm(w)= \int_{\CC} \|(uC_\psi)^* k_{(w,\alpha)}\|_{(2,\alpha)}^p
dm(w).
\end{equation} Since $(uC_\psi)^* k_{(w,\alpha)}= \overline{u(w)}
e^{-\alpha|w|^2/2} K_{(\psi(w),\alpha)},$ we find
\begin{equation*}\|(uC_\psi)^* k_{(w,\alpha)}\|_{(2,\alpha)}= |u(w)| e^{\frac{\alpha}{2}\big(|\psi(w)|^2-|w|^2\big)},
\end{equation*} and
plugging this into \eqref{norm} gives that $uC_\psi\in S_p$ if and
only if \begin{equation*} \int_{\CC} |u(z)|^p
e^{\frac{p\alpha}{2}\big(|\psi(z)|^2-|z|^2\big)} dm(z) <\infty.
\end{equation*} Compactness forces that $\psi(z)= az+b, \ |a|<1$ and
hence the  desired result follows.

 Note that the above argument
can not be  carried  over  in general
to simplify Theorem~\ref{Schatten}. 
 Combining Corollary~\ref{corscahtten} with
Theorem~\ref{bounded}
 immediately gives the following known Schatten class membership criteria
for the composition operator.
\begin{corollary}Let $0<p<\infty$ and $\psi$ be an entire function.
Then the following are equivalent for a bounded map $C_\psi$ .
\begin{enumerate}
\item The map $C_\psi: {F_\alpha^2} \to {F_\alpha^2}$ is compact.
\item The map $C_\psi: {F_\alpha^2}\to {F_\alpha^2}$
belongs to $S_p$ for all $p >0$.
\item $\psi(z)= az+b, a, b\in \CC,$ and $ |a|<1.$
\end{enumerate}
\end{corollary}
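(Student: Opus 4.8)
The plan is to establish the cycle (iii) $\Rightarrow$ (ii) $\Rightarrow$ (i) $\Rightarrow$ (iii), invoking only the results already proved for $V_{(g,\psi)}$ and $uC_\psi$, specialized to the constant weight $u\equiv 1$ (equivalently $|g'(z)|/(1+|z|)\simeq 1$), so that $uC_\psi=C_\psi$. Throughout, boundedness of $C_\psi$ is part of the hypothesis, which is what makes all three statements meaningful.

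For (iii) $\Rightarrow$ (ii), suppose $\psi(z)=az+b$ with $|a|<1$. Fix $p>0$ and apply Corollary~\ref{corscahtten} with $u\equiv 1$; the only thing to check is that $\int_{\CC} e^{\frac{p\alpha}{2}\big((|a|^2-1)|z|^2+2\Re\langle az,b\rangle\big)}\,dm(z)<\infty$. Since $(|a|^2-1)|z|^2+2\Re\langle az,b\rangle\le -(1-|a|^2)|z|^2+2|a||b|\,|z|\to-\infty$ at a quadratic rate, the integrand is dominated by a genuine Gaussian and the integral converges for every $p>0$. Hence $C_\psi\in S_p$ for all $p>0$, which is (ii). (In fact Corollary~\ref{corscahtten} already shows (ii) and (iii) are equivalent outright, since its integrability side condition is automatic once $|a|<1$ and does not depend on $p$.)

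The implication (ii) $\Rightarrow$ (i) is immediate, since for $0<p<\infty$ every operator in $S_p$ is compact, so membership in $S_p$ for even a single $p$ forces $C_\psi$ to be compact. For (i) $\Rightarrow$ (iii), apply Theorem~\ref{bounded}(ii) to $C_\psi$, i.e. to $uC_\psi$ with $u\equiv 1$ and $p=q=2$: compactness is equivalent to $\lim_{|z|\to\infty}B_{(\psi,\alpha)}(1)(z)=0$, where, after the rewriting used in the discussion following Theorem~\ref{bounded}, $B_{(\psi,\alpha)}(1)(w)=\int_{\CC}e^{\alpha(|\psi(z)|^2-|z|^2-|\psi(z)-w|^2)}\,dm(z)$. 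As recorded there, mere boundedness of this transform forces $\psi(z)=az+b$ with $|a|\le 1$, and $b=0$ when $|a|=1$; the remaining case $|a|=1$, $b=0$ is excluded by compactness, because the change of variable $w=az$ gives $\|f\circ\psi\|_{(2,\alpha)}=\|f\|_{(2,\alpha)}$, so $C_\psi$ would be an isometry of the infinite-dimensional space $F_\alpha^2$ and hence not compact. Therefore $|a|<1$, which is (iii), and the cycle closes.

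The only genuinely non-routine point is the step (i) $\Rightarrow$ (iii), namely extracting from the decay of $B_{(\psi,\alpha)}(1)$ the rigid conclusion that $\psi$ must be affine with contraction constant strictly less than $1$; this is precisely the Liouville-type argument already isolated in the excerpt for $V_{(g,\psi)}$ with $g'\ne 0$, so once that is invoked the corollary reduces to the bookkeeping above. One should just make sure the specialization $u\equiv 1$ is legitimate — that $uC_\psi$ is bounded exactly when $C_\psi$ is — which is clear since the two operators coincide.
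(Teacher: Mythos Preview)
Your proof is correct and follows essentially the same approach as the paper, which simply records the corollary as immediate from combining Corollary~\ref{corscahtten} (specialized to $u\equiv 1$) with Theorem~\ref{bounded}. You have spelled out the cycle (iii)$\Rightarrow$(ii)$\Rightarrow$(i)$\Rightarrow$(iii) and supplied an explicit isometry argument to exclude $|a|=1$, whereas the paper's discussion after Theorem~\ref{bounded} rules that case out via the Berezin transform; the substance is the same.
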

\section{Proof of the main results}
 One of the
main tools in proving  our results is the following lemma.
\begin{lemma}
\label{lemdfinite}  Let $f$  be an entire function  and
$0<p<\infty.$ Then
 \begin{equation*}
 \int_{\CC} |f(z)|^p e^{\frac{-\alpha p}{2}|z|^2} dm(z) \simeq |f(0)|^p +
 \int_{\CC} \frac{|f'(z)|^p }{(1+|z|)^p }e^{\frac{-\alpha p}{2}|z|^2}
 dm(z).
\end{equation*}
\end{lemma}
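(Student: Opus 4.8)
The plan is to prove the two-sided estimate in Lemma~\ref{lemdfinite} by establishing each inequality separately, using only elementary pointwise estimates for entire functions on the plane together with a covering of $\CC$ by unit disks.

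\emph{The direction $\gtrsim$.} First I would bound the right-hand side by the left-hand side. For the $|f(0)|^p$ term, subharmonicity of $|f|^p$ gives $|f(0)|^p \lesssim \int_{D(0,1)} |f(z)|^p\, dm(z) \lesssim \int_{D(0,1)} |f(z)|^p e^{-\frac{\alpha p}{2}|z|^2}\, dm(z)$, since $e^{-\frac{\alpha p}{2}|z|^2}$ is bounded below by a positive constant on the fixed disk $D(0,1)$; this is clearly dominated by the full integral over $\CC$. For the derivative term, the key is the pointwise estimate
\begin{equation*}
\frac{|f'(z)|^p}{(1+|z|)^p} e^{-\frac{\alpha p}{2}|z|^2} \lesssim \int_{D(z,1)} |f(w)|^p e^{-\frac{\alpha p}{2}|w|^2}\, dm(w),
\end{equation*}
which I would obtain as follows: by the Cauchy estimate $|f'(z)| \lesssim \sup_{w \in D(z,1)} |f(w)|$, and then by subharmonicity applied on a slightly smaller disk, $\sup_{w \in D(z,1/2)} |f(w)|^p \lesssim \int_{D(z,1)} |f(w)|^p\, dm(w)$. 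The point is that for $w \in D(z,1)$ one has $\big| |w|^2 - |z|^2 \big| \le 2|z| + 1 \lesssim 1 + |z|$, so $e^{-\frac{\alpha p}{2}|z|^2} \simeq e^{-\frac{\alpha p}{2}|w|^2} e^{O(1+|z|)}$, and the gained factor $e^{O(1+|z|)}$ is absorbed by the $(1+|z|)^{-p}$ out front — this is exactly where the weight $(1+|z|)^{-p}$ is needed and where I expect the main bookkeeping to lie. Integrating this pointwise bound in $z$ over $\CC$ and applying Fubini (the disks $D(z,1)$ have bounded overlap in the sense that $\int_\CC \chi_{D(z,1)}(w)\,dm(z)$ is a fixed constant) yields $\int_\CC \frac{|f'(z)|^p}{(1+|z|)^p} e^{-\frac{\alpha p}{2}|z|^2}\,dm(z) \lesssim \int_\CC |f(w)|^p e^{-\frac{\alpha p}{2}|w|^2}\,dm(w)$.

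\emph{The direction $\lesssim$.} For the reverse inequality I would use the fundamental theorem of calculus along the ray from $0$ to $z$: writing $f(z) = f(0) + \int_0^1 f'(tz)\, z\, dt$, so that $|f(z)| \le |f(0)| + |z| \int_0^1 |f'(tz)|\, dt$. Raising to the $p$-th power (using $(a+b)^p \lesssim a^p + b^p$ and, for $p \ge 1$, Jensen/Hölder on the $t$-integral; for $p<1$ the inequality $(\int|h|)^p \le \int |h|^p$ is even easier once combined with convexity on the radial variable — here one should be slightly careful and may instead integrate $|f'|^p$ against $t^{p-1}$-type weights), one reduces to showing
\begin{equation*}
\int_\CC |z|^p \Big(\int_0^1 |f'(tz)|\,dt\Big)^p e^{-\frac{\alpha p}{2}|z|^2}\, dm(z) \lesssim \int_\CC \frac{|f'(w)|^p}{(1+|w|)^p} e^{-\frac{\alpha p}{2}|w|^2}\, dm(w).
\end{equation*}
After passing to polar coordinates $z = re^{i\theta}$, the inner integral becomes a one-dimensional average of $|f'|$ along the radius, and the change of variable $w = tz$ together with the elementary fact that $s \mapsto s^{p+1} e^{-\frac{\alpha p}{2}s^2}$ is integrable on $[0,\infty)$ and that $\int_{r}^{\infty} $-type tails of this function decay at least like $e^{-c r^2}/(1+r)$ lets one absorb everything; the factor $(1+|w|)^{-p}$ on the right is harmless since it only helps. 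The $|f(0)|^p$ contribution on the left of the Lemma is handled trivially since it appears as a standalone term on the right-hand side too.

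\emph{Main obstacle.} The genuinely delicate point is matching the Gaussian weights across the disk $D(z,1)$ (respectively across the segment $[0,z]$) and verifying that the polynomial factor $(1+|z|)^{\pm p}$ is exactly the right power to compensate the exponential slack $e^{O(1+|z|)}$ coming from $\big||w|^2-|z|^2\big| \lesssim 1+|z|$; getting the powers to balance, rather than any deep idea, is the crux, and it is why the statement carries the weight $(1+|z|)^{-p}$ rather than no weight. Everything else is a standard subharmonicity-plus-Fubini argument over a bounded-overlap cover of $\CC$, and for $0<p<1$ one simply replaces the Hölder step by the subadditivity $(\sum a_i)^p \le \sum a_i^p$.
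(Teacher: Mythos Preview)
The paper does not prove Lemma~\ref{lemdfinite} at all: immediately after the statement it says ``The lemma was proved by Constantin \cite{Olivia}'' and moves on. So there is no in-paper argument to compare yours with; what matters is whether your sketch actually goes through.

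It does not, and the failure is exactly at the step you flag as ``the crux.'' In the $\gtrsim$ direction you write that for $w\in D(z,1)$ one has $\big||w|^2-|z|^2\big|\lesssim 1+|z|$, hence $e^{-\frac{\alpha p}{2}|z|^2}\simeq e^{-\frac{\alpha p}{2}|w|^2}e^{O(1+|z|)}$, and then assert that ``the gained factor $e^{O(1+|z|)}$ is absorbed by the $(1+|z|)^{-p}$ out front.'' That last sentence is false: $(1+|z|)^{-p}e^{c(1+|z|)}\to\infty$ for every $c>0$, so a polynomial weight cannot absorb an exponential loss. With the unit-disk Cauchy estimate you propose, the Gaussian mismatch is genuinely of exponential order in $|z|$ and the argument collapses.

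The standard repair has two ingredients you are missing. First, the sub-mean-value property for $|f|^pe^{-\frac{\alpha p}{2}|\cdot|^2}$ holds \emph{without} exponential loss: apply subharmonicity not to $|f|^p$ but to $|f(w)e^{\alpha\bar z w}|^p$ (equivalently to $|f/k_{(z,\alpha)}|^p$) on $D(z,1)$, which yields
\[
|f(z)|^p e^{-\frac{\alpha p}{2}|z|^2}\ \lesssim\ \int_{D(z,1)}|f(w)|^p e^{-\frac{\alpha p}{2}|w|^2}\,dm(w),
\]
since $|k_{(z,\alpha)}(w)|^{-p}e^{0}=e^{-\frac{\alpha p}{2}|w|^2}e^{\frac{\alpha p}{2}|w-z|^2}\le e^{\frac{\alpha p}{2}}e^{-\frac{\alpha p}{2}|w|^2}$ on $D(z,1)$. (This is exactly the estimate the paper quotes later as ``Lemma~1 in \cite{JIKZ}.'') Second, the factor $(1+|z|)^{-p}$ does \emph{not} come from absorbing exponential slack; it comes from taking the Cauchy estimate on a disk of radius $r\simeq 1/(1+|z|)$ rather than radius $1$. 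On that small disk $\big||w|^2-|z|^2\big|\le(2|z|+1)r\lesssim 1$, so the Gaussian is genuinely comparable, while the $1/r$ in Cauchy's bound produces the factor $1+|z|$. Combining these two steps with Fubini gives the $\gtrsim$ direction cleanly. Your sketch of the $\lesssim$ direction is plausible in outline but also quite loose (the Hardy-type inequality you need after Minkowski is not obvious as written); once you have the kernel trick in hand, it is worth revisiting that direction with the same care.
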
The lemma was proved by
Constantin \cite{Olivia} and  describes Fock spaces in terms of
derivatives  as analogous to the case of Bergman spaces in
\cite{PP}. The following estimate will be needed frequently in our
consideration later.
\begin{lemma}
\label{pointwise}  For each $p>0,$ let $\mu_{(p,\alpha)}$ be
positive pull back  measure on $\CC$ defined by \begin{equation*}
\mu_{(p,\alpha)}(E)= \frac{\alpha p}{2\pi} \int_{\psi^{-1}(E)}
\frac{|g'(z)|^p}{(1+|z|)^p} e^{-\frac{\alpha p}{2}|z|^2}
dm(z)\end{equation*} for every Borel  subset $E$ of $\CC$. Then
\begin{equation*}
\int_{D(w,1)}  e^{\frac{\alpha p}{2}|z|^2} d\mu_{(p,\alpha)}(z)
\lesssim e^{\frac{p\alpha}{2}} B_{(\psi,\alpha)}(|g|^p)(w).
\end{equation*}
\end{lemma}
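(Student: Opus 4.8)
The plan is to unwind the definition of the image (``pull-back'') measure $\mu_{(p,\alpha)}$, rewrite the left-hand integral as an honest integral over $\psi^{-1}(D(w,1))\subset\CC$, and then compare the exponential weight that appears with the one defining $B_{(\psi,\alpha)}(|g|^p)(w)$.

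First I would record the elementary ``completing the square'' identity
\[
2\Re\langle\psi(z),w\rangle-|z|^2-|w|^2=|\psi(z)|^2-|z|^2-|\psi(z)-w|^2,
\]
which is exactly the reformulation of $B_{(\psi,\alpha)}(|g|^p)(w)$ already displayed in the introduction. Next, by the change of variables formula for an image measure, for every nonnegative Borel function $h$ on $\CC$ one has
\[
\int_{\CC}h(\zeta)\,d\mu_{(p,\alpha)}(\zeta)=\frac{\alpha p}{2\pi}\int_{\CC}h(\psi(z))\,\frac{|g'(z)|^p}{(1+|z|)^p}\,e^{-\frac{\alpha p}{2}|z|^2}\,dm(z).
\]
Applying this with $h(\zeta)=\mathbf{1}_{D(w,1)}(\zeta)\,e^{\frac{\alpha p}{2}|\zeta|^2}$ turns the left-hand side of the asserted estimate into
\[
\frac{\alpha p}{2\pi}\int_{\psi^{-1}(D(w,1))}e^{\frac{\alpha p}{2}(|\psi(z)|^2-|z|^2)}\,\frac{|g'(z)|^p}{(1+|z|)^p}\,dm(z).
\]

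Finally I would exploit that on $\psi^{-1}(D(w,1))$ one has $|\psi(z)-w|<1$, hence $e^{\frac{\alpha p}{2}|\psi(z)-w|^2}<e^{\frac{\alpha p}{2}}$; multiplying and dividing by this factor inside the last integral, invoking the square-completion identity, and then enlarging the region of integration from $\psi^{-1}(D(w,1))$ back to all of $\CC$ (the integrand is nonnegative) bounds the expression above by $\frac{\alpha p}{2\pi}\,e^{\frac{\alpha p}{2}}\,B_{(\psi,\alpha)}(|g|^p)(w)$, which is the claim after absorbing the fixed constant $\frac{\alpha p}{2\pi}$ into $\lesssim$. (If $B_{(\psi,\alpha)}(|g|^p)(w)=\infty$ there is nothing to prove.) There is no genuine obstacle here: the only points needing care are getting the image-measure change of variables right — so that the weight $e^{\frac{\alpha p}{2}|\psi(z)|^2}$ is picked up correctly from $e^{\frac{\alpha p}{2}|\zeta|^2}$ — and checking that $e^{\frac{\alpha p}{2}(|\psi(z)|^2-|z|^2)}$ together with the truncation term $|\psi(z)-w|^2$ reproduces, up to a constant, the integrand of $B_{(\psi,\alpha)}(|g|^p)$.
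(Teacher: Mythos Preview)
Your argument is correct and is essentially the same as the paper's: the paper phrases the key inequality as $e^{\frac{\alpha p}{2}|\zeta|^2}\le e^{\frac{\alpha p}{2}}|k_{(w,\alpha)}(\zeta)|^p$ for $\zeta\in D(w,1)$, enlarges the domain to all of $\CC$, and then applies the push-forward change of variables to identify $\int_{\CC}|k_{(w,\alpha)}|^p\,d\mu_{(p,\alpha)}$ with $B_{(\psi,\alpha)}(|g|^p)(w)$, which is exactly your ``multiply and divide by $e^{\frac{\alpha p}{2}|\psi(z)-w|^2}$'' step carried out before the change of variables rather than after. The only cosmetic difference is the order of operations and that the paper packages the exponential identity in terms of the normalized kernel $k_{(w,\alpha)}$ rather than the explicit square-completion formula.
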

\emph{Proof.} For $p= 2$, the lemma was proved in \cite{UEKI}.  A
modification of that proof  works for other p's which we sketch it
now. For each $z \in D(w, 1),$ observe that
\begin{equation*}|k_{(w,\alpha)}(z)|^p= \big|e^{\alpha\bar{w}z-\frac{\alpha}{2}|w|^2}\big|^p =
e^{\frac{\alpha p}{2} \big( |z|^2-|z-w|^2\big)} \geq
e^{-\frac{\alpha p}{2}+ \frac{p\alpha}{2}|z|^2}.\end{equation*} This
implies
\begin{eqnarray*}e^{-\frac{\alpha p}{2}}\int_{D(w,1)}e^{\frac{p\alpha }{2}|z|^2}
d\mu_{(p,\alpha)}(z)&\leq& \int_{D(w,1)} |k_{(w,\alpha)}(z)|^p
d\mu_{(p,\alpha)}(z)\\
&\leq& \int_{\CC} |k_{(w,\alpha)}(z)|^p d\mu_{(p,\alpha)}(z).
\end{eqnarray*} Invoking the definition of the measure $\mu_{(p,\alpha)}$ and
the integral transform $B_{(\psi,\alpha)}(|g|^p)$ give
 \begin{eqnarray*} \int_{\CC}|k_{(w,\alpha)}(z)|^p d\mu_{(p,\alpha)}(z)
 &\simeq& \int_{\CC} |k_{(w,\alpha)}(\psi(z))|^p
\frac{|g'(z)|^p}{(1+|z|)^p} e^{-\frac{p\alpha}{2}|z|^2} dm(z)\\
&=& B_{(\psi,\alpha)}(|g|^p)(w).  \end{eqnarray*}
 \emph{Proof of Theorem~\ref{bounded}.} (i)
Suppose that $V_{(g, \psi)}$ is bounded. Then a simple computation
shows that $\|k_{w,\alpha}\|_{(p,\alpha)}=1$ for all $p>0.$ Thus
applying $ V_{(g,\psi)} $ on the normalized kernel functions along
with Lemma \ref{lemdfinite} yields
\begin{eqnarray}
\label{normnec} 1&\gtrsim&
\|V_{(g,\psi)}k_{(w,\alpha)}\|_{(q,\alpha)}^q \simeq
B_{(\psi,\alpha)}(|g|^q)(w),
\end{eqnarray} from which the necessity follows.  To  prove the sufficiency we
extend the techniques used in \cite{UEKI}. By definition of the
measure $\mu_{(q,\alpha)},$ Lemma \ref{lemdfinite}, and Lemma~1 in
\cite{JIKZ}
\begin{eqnarray*}
\|V_{(g,\psi)}f\|_{(q,\alpha)}^q&\simeq& \int_{\CC} |f(z)|^q
d\mu_{(q,\alpha)}(z)\\
&\lesssim& \int_{\CC}e^{\frac{\alpha q}{2}|z|^2}
d\mu_{(q,\alpha)}(z) \int_{\CC}\chi_{D(z,1)}(w) |f(w)|^q
e^{-\frac{q\alpha}{2}|w|^2}dm(w)
\end{eqnarray*} where $ \chi_{D(z,1)}$ is the characteristic function
of $D(z,1).$ By Lemma \ref{pointwise}, Fubini's Theorem and the fact
that  $ \chi_{D(z,1)}(w)= \chi_{D(w,1)}(z),$ for all $z, w \in \CC,$
we get
\begin{eqnarray}
\label{normsuff} \|V_{(g,\psi)}f\|_{(q,\alpha)}^q &\lesssim&
\int_{\CC}e^{\frac{q\alpha}{2}|z|^2} d\mu_{(q,\alpha)}(z)
\int_{\CC}\chi_{D(z,1)}(w) |f(w)|^q e^{-\frac{q\alpha}{2}|w|^2}dm(w)\nonumber\\
&=&\int_{\CC}|f(w)|^q e^{-\frac{q\alpha}{2}|w|^2}dm(w)
\bigg(\int_{\CC}\chi_{D(w,1)}(z)e^{\frac{q\alpha}{2}|z|^2}d\mu_{(q,\alpha)}(z)\bigg)dm(w)\nonumber\\
&\lesssim& \int_{\CC} |f(w)|^q e^{-\frac{q\alpha}{2}|w|^2}
B_{(\psi,\alpha)}(|g|^q)(w)
dm(w)\label{normgen}\\
& \lesssim & \sup_{w\in \CC} B_{(\psi,\alpha)}(|g|^q)(w)
\|f\|_{(q,\alpha)}^q\nonumber\\
& \leq& \sup_{w\in \CC} B_{(\psi,\alpha)}(|g|^q)(w)
\|f\|_{(p,\alpha)}^q \label{normsuff}
\end{eqnarray} where the last inequality follows by the inclusion
$F_\alpha^p \subseteq F_\alpha^q.$  From \eqref{normnec} and
\eqref{normsuff}, we
deduce that \eqref{norm1estimate} holds.\\
For (ii), observe that $k_{(w,\alpha)}(z)=
e^{\alpha\overline{w}z-\alpha\frac{1}{2}|w|^2} \to 0$ uniformly on
compact subsets of $\CC$ as $|w|\to \infty.$ It follows that
 \begin{eqnarray*}
0= \lim_{|w|\to \infty}\|V_{(g,\psi)}
k_{(w,\alpha)}\|_{(q,\alpha)}^q &\simeq&\lim_{|w|\to
\infty}B_{(\psi,\alpha)}(|g|^q)(w)
\end{eqnarray*} from which the necessity follows again.  So we remain to show the
sufficiency of the condition.  To this end, we let $f_n$ be a
sequence of entire functions such that $\sup_n \|f_n\|_{(p,\alpha)}
<\infty$ and $f_n \to 0$ uniformly on compact subset of $\CC$ as
$n\to \infty.$ Then following the same line of argument as in the
proof of the sufficiency of Theorem~\ref{bounded}, we obtain
\begin{eqnarray}
\|V_{(g,\psi)}(f_n)\|_{(q,\alpha)}^q  &\lesssim& \int_{\CC}
|f_n(w)|^q e^{-\frac{q\alpha}{2}|w|^2} B_{(\psi,\alpha)}(|g|^q)(w)
dm(w)\nonumber=I_n
  \label{end}
\end{eqnarray}
Then for a fixed $r>0,$ we split $I_n$ as
\begin{eqnarray}
I_n&=& \bigg(\int_{|w|\leq r} +\int_{|w|>r}\bigg) |f_n(w)|^q
e^{-\frac{q\alpha}{2}|w|^2} B_{(\psi,\alpha)}(|g|^q)(w) dm(w)\nonumber\\
&=& I_{n1} +I_{n2} \label{same}
\end{eqnarray} and estimate each piece independently. We first estimate  $I_{n1}.$
\begin{eqnarray*}
\limsup_{n \to \infty}I_{n1}&=&\limsup_{n \to \infty}\int_{|w|\leq
r} |f_n(w)|^q
e^{-\frac{q\alpha}{2}|w|^2} B_{(\psi,\alpha)}(|g|^q)(w) dm(w)\nonumber\\
&\leq& \limsup_{n \to \infty} \sup_{|w|\leq r}|f_n(w)|^q\int_{|w|
\leq r}e^{-\frac{q\alpha}{2}|w|^2} B_{(\psi,\alpha)}(|g|^q)(w) dm(w)\nonumber\\
&\lesssim& \limsup_{n \to \infty} \sup_{|w|\leq r}|f_n(w)|^q \to 0,
\ \text{ as} \ \ n\to \infty,
\end{eqnarray*} since $\sup_{w\in \CC }B_{(\psi,\alpha)}(|g|^q)(w)<\infty$. We
need to make  a similar conclusion for the second piece of integral
\begin{eqnarray*}
\limsup_{n \to \infty}I_{n2}&=& \limsup_{n \to \infty}\int_{|w|>r}
|f_n(w)|^q
e^{-\frac{q\alpha }{2}|w|^2} B_{(\psi,\alpha)}(|g|^q)(w) dm(w)\\
&\leq& \sup_{|w|>r}B_{(\psi,\alpha)}(|g|^q)(w) \limsup_{n \to
\infty}\|f_n\|_{(q,\alpha)}^q \\
&\leq& \sup_{|w|>r}B_{(\psi,\alpha)}(|g|^q)(w) \limsup_{n \to
\infty}\|f_n\|_{(p,\alpha)}^q
\end{eqnarray*}
Since $\sup_n \|f_n\|_{(p,\alpha)} <\infty$, we see that the last
expression in the right hand side above converges to zero when $r\to
\infty,$ and
hence $V_{(g,\psi)}(f_n) \to 0$ in $F_\alpha^q$ as $n\to \infty$.\\
 \emph{Proof of Theorem~\ref{thsmall}.}  Since (ii) obviously implies (i), we  shall show
that (iii)$\Rightarrow $(ii) and (i)$ \Rightarrow $ (iii). We first
assume that $B_{(\psi,\alpha)}(|g|^q)\in L^{p/(p-q)}(\CC, dm),$ and
show that $V_{(g,\psi)}$ is compact. Let $f_n$ be a sequence of
functions such that $\sup_n \|f_n\|_{(p,\alpha)}<\infty$ and $f_n$
converges to zero uniformly on compact subsets of $\CC.$ Then we
proceed as in the proof of
 Theorem~\ref{bounded} until we get equation \eqref{same}. We only
 need to modify our arguments in estimating the two piece of
 integrals $I_{n1}$ and $I_{n2}.$
Since $f_n \to 0$ uniformly on compact subsets of $\CC$
\begin{eqnarray*} \label{partly}\limsup_{n \to
\infty}I_{n1}&=& \limsup_{n \to \infty}\int_{|w|\leq r} |f_n(w)|^q
e^{-\frac{q\alpha}{2}|w|^2} B_{(\psi,\alpha)}(|g|^q)(w) dm(w)\nonumber\\
&\leq& \limsup_{n \to \infty} \sup_{|w|\leq
r}|f_n(w)|^q\int_{|w|\leq r} e^{-\frac{q\alpha}{2}|w|^2}
B_{(\psi,\alpha)}(|g|^q)(w)
dm(w)\nonumber\\
&\lesssim& \limsup_{n \to \infty} \sup_{|w|\leq r}|f_n(w)|^q \to 0 ,
\ n\to \infty.\end{eqnarray*} The last integral above converges
because \begin{equation*}\int_{|w|\leq r}
e^{-\frac{q\alpha}{2}|w|^2} B_{(\psi,\alpha)}(|g|^q)(w)
dm(w)\lesssim \bigg(\int_{|w|\leq r}B^{s}_{(\psi,\alpha)}(|g|^q)(w)
 dm(w)
\bigg)^{\frac{1}{s}}\end{equation*} by H$\ddot{\text{o}}$lder's
inequality where we set $s= p/(p-q)$ for brevity.  Again by
H$\ddot{\text{o}}$lder's inequality we obtain,
\begin{eqnarray*}
\limsup_{n \to \infty}I_{n2}&=& \limsup_{n \to \infty}\int_{|w|>r}
|f_n(w)|^q
e^{-\frac{q}{2}|w|^2} B_{(\psi,\alpha)}(|g|^q)(w) dm(w)\\
&\lesssim& \bigg(\int_{|w|>r}B^{s}_{(\psi,\alpha)}(|g|^q)(w)
 dm(w) \bigg)^{\frac{1}{s}} \limsup_{n
\to \infty}\|f_n\|_{(p,\alpha)}^q.
\end{eqnarray*}
 Since $\sup_n \|f_n\|_{(p,\alpha)}<\infty$,  we let $r \to \infty$ in
the above relation and with \eqref{partly} we  conclude that
$V_{(g,\psi)} \to 0$ as $ n\to \infty.$ Thus $ V_{(g,\psi)}$ is
compact. Obviously,  (i) follows from (ii). Thus our proof will be
complete once we show that (iii) follows from (i). To this end,  we
observe that  $V_{(g,\psi)}$ is bounded if and only if
\begin{eqnarray}
\label{carleson} \int_{\CC} |V_{(g,
\psi)}f(z)|^qe^{\frac{-q\alpha}{2}|z|^2}dm(z)&\simeq& \int_{\CC}
|f(z)|^q
d\mu_{(q,\alpha)}(z)\nonumber\\
&=& \int_{\CC}|f(z)|^q
e^{\frac{-q\alpha}{2}|z|^2}d\lambda_{(q,\alpha)}(z)\nonumber\\
&\lesssim&\|f\|_{(p,\alpha)}^q
\end{eqnarray} where $d\lambda_{(q,\alpha)}(z)= e^{\frac{q\alpha}{2}|z|^2}d\mu_{(q,\alpha)}(z)$.
The inequality in \eqref{carleson} means that $\lambda_{(q,\alpha)}$
is a $(p,q)$ Fock--Carleson measure. By Theorem~3.3 in \cite{ZHXL},
this holds if and only if
\begin{equation}
 \widetilde{\lambda_{(q,\alpha)}}(w)= \int_{\CC} |k_{(w,\alpha)}(z)|^q
e^{\frac{-q\alpha}{2}|z|^2}d\lambda_{(q,\alpha)}(z)\in
L^{p/(p-q)}(\CC, dm).\end{equation} Substituting back
$d\lambda_{(q,\alpha)}$ and $d\mu_{(q,\alpha)}$ in terms of $dm,$ we
obtain
\begin{eqnarray*}
\widetilde{\lambda_{(q,\alpha)}}(w)&=& \int_{\CC}
|k_{(w,\alpha)}(z)|^q
e^{\frac{-q\alpha}{2}|z|^2} d\lambda_{(q,\alpha)}(z)=\int_{\CC}|k_{(w,\alpha)}(z)|^q d\mu_{(q,\alpha)}(z)\\
&\simeq& \int_{\CC}|k_{(w,\alpha)}(z)|^q
\bigg|\frac{g'(z)}{1+|z|}\bigg|^q e^{\frac{-q\alpha}{2}|z|^2} dm(z)
= B_{(\psi,\alpha)}(|g|^q)(w)
\end{eqnarray*}
We remain to prove the norm estimate in \eqref{normless}. But this
can be easily seen as follows.  Since  $\lambda_{(q,\alpha)}$ is an
$(p, q)$ Fock--Carleson measure, the series of norm estimates in
 Theorem~3.3 in \cite{ZHXL} yields
\begin{equation*}
\|V_{g,\psi}\|\simeq
\Big(\|\widetilde{\lambda_{(q,\alpha)}}\|_{L^{p/(p-q)}(\CC,dm)}\Big)^{1/q}\simeq
\Big(\|B_{(\psi,\alpha)}(|g|^q)\|_{L^{p/(p-q)}(\CC,dm)}\Big)^{1/q}
\end{equation*}
which completes the proof of the theorem.

 Theorem~\ref{essentialnorm} follows from application of Lemmas
 \ref{lemdfinite}-\ref{pointwise}, Theorem~\ref{bounded}
 and appropriate  combination of arguments used to prove similar results
 in \cite{ZZH, UEKI1,UEKI, UEKI2}. Recall that each entire function $f$ can be expressed as $f(z)=
\sum_{k=0}^\infty p_k(z)$ where the $p_k's$ are  polynomials of
degree $k.$ We consider  a sequence of operators $R_n$  defined by
\begin{equation*}(R_n f)(z)= \sum_{k=n}^\infty
p_k(z).\end{equation*} It was proved in \cite{DGP,UEKI1} that
$\lim_{n\to \infty} \|R_nf\|_{(p,\alpha)} = 0$ for each $f$ in $
F_\alpha^p$, and hence $ \sup_n\|R_n\|<\infty.$ We need the
following more lemma in proving the theorem.
\begin{lemma}\label{generashipro}
 Let $1<p\leq q<\infty$ and $\psi$ be an entire function.
  If $V_{(g,\psi)}: F_\alpha^p \to  F_\alpha^q$ is bounded, then
 \begin{equation*}
\|V_{(g,\psi)}\|_e \leq \liminf_{n \to \infty} \| V_{(g,\psi)}
R_n\|_{(q,\alpha)}.
\end{equation*}
\end{lemma}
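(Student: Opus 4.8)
The goal is to bound the essential norm $\|V_{(g,\psi)}\|_e$ from above by $\liminf_{n\to\infty}\|V_{(g,\psi)}R_n\|$. The plan is to exploit the fact that $R_n = I - S_n$ where $S_n f = \sum_{k=0}^{n-1} p_k$ is a finite-rank operator (it maps into the space of polynomials of degree at most $n-1$). Since $V_{(g,\psi)}S_n$ is then compact (being of finite rank, as $S_n$ has finite-dimensional range and $V_{(g,\psi)}$ is bounded), we have by the definition of the essential norm as distance to the compact operators that
\[
\|V_{(g,\psi)}\|_e = \|V_{(g,\psi)}(S_n + R_n)\|_e = \|V_{(g,\psi)}R_n\|_e \leq \|V_{(g,\psi)}R_n\|
\]
for every $n$, whence taking $\liminf$ over $n$ gives the claim.

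The main point requiring care is that $S_n$ is indeed a bounded finite-rank operator on $F_\alpha^p$, so that the decomposition $R_n = I - S_n$ is legitimate and $V_{(g,\psi)}R_n$ differs from $V_{(g,\psi)}$ by a compact operator. This follows from the cited fact (from \cite{DGP, UEKI1}) that $\|R_n f\|_{(p,\alpha)} \to 0$ for each $f \in F_\alpha^p$ together with $\sup_n \|R_n\| < \infty$; indeed $S_n = I - R_n$ is then bounded uniformly in $n$, and its range lies in the finite-dimensional space of polynomials of degree $\le n-1$, so $S_n$ is finite-rank and hence $V_{(g,\psi)}S_n$ is compact. I would also note that boundedness of $V_{(g,\psi)}$ (a hypothesis of the lemma) is what guarantees $V_{(g,\psi)}R_n$ and $V_{(g,\psi)}S_n$ make sense as bounded operators.

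One should be slightly careful about the norm on the right-hand side: the statement writes $\|V_{(g,\psi)}R_n\|_{(q,\alpha)}$, which I read as the operator norm of $V_{(g,\psi)}R_n : F_\alpha^p \to F_\alpha^q$ (the subscript recording the target space). With that reading the argument above is complete: for each fixed $n$,
\[
\|V_{(g,\psi)}\|_e \le \|V_{(g,\psi)} - V_{(g,\psi)}S_n\| = \|V_{(g,\psi)}R_n\|,
\]
using that $V_{(g,\psi)}S_n$ is compact and that $\|T\|_e \le \|T - K\|$ for any compact $K$. Passing to the $\liminf$ in $n$ yields $\|V_{(g,\psi)}\|_e \le \liminf_{n\to\infty}\|V_{(g,\psi)}R_n\|_{(q,\alpha)}$, which is the assertion of the lemma. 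The hypotheses $1 < p \le q < \infty$ are not really needed for this direction; they will matter only for the reverse inequality establishing the full essential norm formula in Theorem~\ref{essentialnorm}, where the polynomial approximation scheme and the pointwise estimates of Lemmas~\ref{lemdfinite}--\ref{pointwise} come into play.
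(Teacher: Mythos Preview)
Your argument is correct and is exactly the standard route: since $S_n=I-R_n$ is bounded with finite-dimensional range, $V_{(g,\psi)}S_n$ is compact and hence $\|V_{(g,\psi)}\|_e\le\|V_{(g,\psi)}-V_{(g,\psi)}S_n\|=\|V_{(g,\psi)}R_n\|$ for every $n$. The paper omits the proof, referring to Lemma~2 in \cite{UEKI2}, which proceeds in the same way; so your approach coincides with the intended one.
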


The proof of the lemma is  similar to the proof of Lemma~2 in  \cite{UEKI2}, and we omit it.\\
 \emph{Proof of Theorem~\ref{essentialnorm}.} We first prove the lower estimate in the
theorem.  We follow the ideas in the proofs of similar results for
weighted composition operators in \cite{ZZH,UEKI}. Let $Q$ be a
compact operator on $F_\alpha^p$. Since
$\|k_{(w,\alpha)}\|_{(p,\alpha)}=1$ and $k_{(w,\alpha)}$ converges
to zero uniformly on compact subset of $\CC$ as $|w| \to \infty$, we
have
\begin{eqnarray*}
\|V_{(g,\psi)}-Q\| &\geq& \limsup_{|w| \to \infty}
\|V_{(g,\psi)}k_{(w,\alpha)}-Q
k_{(w,\alpha)}\|_{(q,\alpha)} \\
&\geq& \limsup_{|w| \to
\infty}\|V_{(g,\psi)}k_{(w,\alpha)}\|_{(q,\alpha)}-\|Q
k_{(w,\alpha)}\|_{(q,\alpha)}\\
&=& \limsup_{|w| \to
 \infty}\|V_{(g,\psi)}k_{(w,\alpha)}\|_{(q,\alpha)}\\
 &\simeq& \bigg(\limsup_{|w| \to
 \infty} B_{(\psi,\alpha)} ( |g|^q)\bigg)^{1/q},
\end{eqnarray*}where the first equality is due to compactness of $Q.$
To prove the upper inequality, we follow the arguments in the proof
of Theorem~\ref{bounded}. For each unit norm $f$ in $F_\alpha^p$,
 we get
\begin{eqnarray*}
 \|V_{(g,\psi)}R_n f\|_{(q,\alpha)}^q&\simeq &  \int_{\CC} |R_nf(z)|^q d\mu_{(q,\alpha)}(z)\\
&\lesssim&  \int_{\CC}e^{\frac{q\alpha}{2}|z|^2}
d\mu_{(q,\alpha)}(z) \int_{\CC}\chi_{D(z,1)}(w) |R_nf(w)|^q e^{\frac{p\alpha}{2}|w|^2}dm(w)\\
&\lesssim& \int_{\CC} |R_nf(w)|^q e^{-\frac{q\alpha}{2}|w|^2}
B_{(\psi,\alpha)}(|g|^q)(w)dm(w)\\
&=& \bigg(\int_{ \CC\setminus D(0,r)}+
\int_{D(0,r)}\bigg)|R_nf(w)|^q e^{-\frac{q\alpha}{2}|w|^2}
B_{(\psi,\alpha)}(|g|^q)(w)dm(w)\\
&=& I_{n1}+ I_{n2}
\end{eqnarray*}
where for some fixed $r>0,$
\begin{eqnarray*}
I_{n1}&=& \int_{ \CC\setminus D(0,r)} |R_nf(w)|^q
e^{-\frac{q\alpha}{2}|w|^2} B_{(\psi,\alpha)}(|g|^q)(w)dm(w)\\
&& \ \ \ \ \ \ \lesssim \sup_{ \CC\setminus D(0,r)}
B_{(\psi,\alpha)}(|g|^q)(w)
\end{eqnarray*} which follows since $
\sup_n\|R_n\|<\infty,$ and \begin{equation*} I_{n2}= \int_{D(0,r)}
|R_nf(w)|^q e^{-\frac{q\alpha}{2}|w|^2}
B_{(\psi,\alpha)}(|g|^q)(w)dm(w). \end{equation*} We remain to
estimate $I_{n2}.$ By Lemma 1 in \cite{UEKI2}, we obtain,
\begin{equation}\label{strilling}
I_{n2} \lesssim\sup_{ w\in \CC} B_{(\psi,\alpha)}(|g|^q)(w) I_{n3}
\int_{\CC}e^{-\frac{q\alpha}{2}|w|^2} dm(w)
\end{equation} where
\begin{equation*}
 I_{n3}=\Bigg( \sum_{k= n}^\infty
\frac{|r|^k}{k!}\bigg( \big(2 s^{-1}\big)^{2^{-1}sk +1}\Gamma\big(
2^{-1}sk +1\big)\bigg)^{1/s}\Bigg)^q
\end{equation*} with $s$ the conjugate exponent of $p$ and $\Gamma$ is the Gamma
function.  By Stirling's formula, it holds
 \begin{equation*}
 \frac{|r|^k}{k!}\bigg( \big(2
s^{-1}\big)^{2^{-1}sk +1}\Gamma \big( 2^{-1}sk +1\big)\bigg)^{1/s}
\simeq \frac{|r|^k}{k!} \big(2 s^{-1}\big)^{2^{-1}k } \big( 2^{-1}sk
+1\big)^{\frac{k}{2}+\frac{1}{s}+\frac{1}{2s}} e^{-k/2}
 \end{equation*} when $k\to \infty.$
By ratio test,  the  series
\begin{equation*} \sum_{k=0}^\infty
\frac{|r|^k}{k!} \big(2 s^{-1}\big)^{2^{-1}k } \big( 2^{-1}sk
+1\big)^{\frac{k}{2}+\frac{1}{s}+\frac{1}{2s}}
e^{-k/2}\end{equation*} converges and hence $I_{n3}$ goes to zero
when $n\to \infty.$ By Theorem~\ref{bounded}, it follows that
$I_{n2} \to 0$ as $n\to \infty.$ Therefore
 \begin{equation*}
 \lim_{n\to \infty} \sup_{\|f\|_{(p,\alpha)}\leq 1}
\|V_{(g,\psi)}R_n f\|_{(q,\alpha)}^q\lesssim \sup_{ \CC\setminus
D(0,r)} B_{(\psi,\alpha)}(|g|^q)(w).\end{equation*} By Lemma
\ref{generashipro} we get \begin{equation*} \|V_{(g,\psi)}\|_e^q
\lesssim \lim_{r \to \infty}\sup_{ \CC\setminus D(0,r)}
B_{(\psi,\alpha)}(|g|^q)(w)\simeq \limsup_{|w| \to \infty}
B_{(\psi,\alpha)}(|g|^q)(w)
\end{equation*} and completes the proof.

 \emph{Proof of Theorem~\ref{Schatten}.}  The crucial step in proving the theorem
  is to introduce  a Teoplitz operator
on $F_\alpha^2.$ Let $\mu$ be a finite positive Borel measure on
$\CC$ satisfying the admissibility condition \begin{equation}
\label{admissible}\int_{\CC} |K_{(w,\alpha)}(z)|^2 e^{-\alpha|w|^2}
d\mu(w)<\infty\end{equation} for all $z\in \CC$. Then we define a
Teoplitz operator by
\begin{equation*}
T_{\mu}f(z)=\int_{\CC}K_{(w,\alpha)}(z) f(w) e^{-\alpha|w|^2}d\mu(w)
\end{equation*} for each $z\in \CC.$ Since the kernel functions are
dense in $F_\alpha^2,$ it follow by the admissibility condition and
H$\ddot{\text{o}}$lder's inequality that $T_\mu$ is well-defined. We
observe that by Lemma \ref{lemdfinite}, the inner product
\begin{equation}\label{below}
\langle f, h\rangle= f(0)\overline{h(0)} +\int_{\CC}
f'(z)\overline{h'(z)} \frac{e^{-\alpha|z|^2}}{(1+|z|)^2}dm(z)
\end{equation} defines a norm which is equivalent to
the usual  norm on $F_\alpha^2.$ We prefer to use this norm since
this alternative approach has the advantage that it permits us to
associate product of  Volterra type integral and composition
operators with Teoplitz operators easily. In deed, if $V_{(g,\psi)}$
is  a bounded operator on $F_\alpha^2, $ then we claim that
$V_{(g,\psi)}^* V_{(g,\psi)}= T_\mu $ where $T_\mu$ is the Teoplitz
operator induced by  the measure $d\mu= \phi o \psi^{-1}$
where\begin{equation*}
\phi(z)=|g'(z)|^2(1+|z|)^{-2}e^{-\alpha|z|^2}dm(z).
\end{equation*}
To show the claim, we consider a function  $f$  in $F_\alpha^2$ and
compute
\begin{eqnarray*}
 V^*_{(g,\psi)}V_{(g,\psi)} (f)(z)&=&\langle V^*_{(g,\psi)}V_{(g,\psi)} f,K_{(z,\alpha)} \rangle= \langle
V_{(g,\psi)} f, V_{(g,\psi)}K_{(z,\alpha)} \rangle\\
&=&  \int_{\CC}
f(\psi(w))\overline{K_{(z,\alpha)}(\psi(w))}|g'(w)|^2 (1+|w|)^{-2}e^{-\alpha|w|^2} dm(w)\\
&=& \int_{\CC} f(\psi(w))K_{(\psi(w),\alpha)}(z)|g'(w)|^2
(1+|w|)^{-2}e^{-\alpha|w|^2} dm(w)\\
 &=& \int_{\CC}f(\eta )K_{(\eta,\alpha)}(z)e^{-\alpha|\eta|^2}d\mu(\eta)=T_\mu(f)(z),
 \end{eqnarray*} follows from change of variables.   This shows that
 $ T_{\mu}= V^*_{(g,\psi)} V_{(g,\psi)}.$ For such particular  measure
 $\mu$, the admissibility condition \eqref{admissible} holds whenever $V_{(g,\psi)}$
 is bounded on $F_\alpha^2.$
Denote the associated Berezin symbol $\widetilde{\mu}$ of $\mu$ by
\begin{equation*}
 \widetilde{\mu}(z)= \langle T_\mu k_{(z,\alpha)},
k_{(z,\alpha)}\rangle, \ \ z \in \CC.
\end{equation*} Then an important result from  \cite{JIKZ} ensures that  the Toeplitz
operator $T_\mu$ belongs $S_p$ if and only if $\widetilde{\mu}$
belongs to $L^p(\CC, dm)$ for each $p>0$. On the other hand,
  $V_{(g,\psi)}$ belongs
to $S_p$ if and only if $V^*_{(g,\psi)} V_{(g,\psi)}$ belongs to
$S_{p/2}$ (see, \cite{KZ}), and this holds if and only if   $
\widetilde{\mu}(z)= \|V_{(g,\psi)}k_{(z,\alpha)}\|_{(2,\alpha)}^2$
belongs to $ L^{p/2}(\CC, dm)$. It is easily seen that
\begin{equation*}\|V_{(g,\psi)}k_{(z,\alpha})\|_{(2,\alpha)}^2\simeq
B_{(\psi,\alpha)}(|g|^2)(z)\end{equation*}
 and completes the proof.
 \subsection*{Acknowledgment}
 The author would like to thank professor
Kristian Seip for some constructive discussions on the subject
matter. \linespread{1.5}
 

\begin{thebibliography}{BRSHZ}
 \linespread{1.5}
\bibitem{Alman} A. Aleman, \emph{A class of integral operators on spaces of analytic functions}, Topics in complex
analysis and operator theory, 3?30, Univ. Malaga, Malaga, 2007.
 \bibitem{Alsi1} A. Aleman  and A. Siskakis, \emph{An integral operator on $H^p$}, Complex Variables, \textbf{28} (1995),
 149--158.
 \bibitem{Alsi2} A. Aleman  and A. Siskakis, \emph{Integration operators on Bergman
 spaces,} Indiana University Math J., \textbf{46} (1997), 337--356.
 \bibitem{CMS} B. Carswell, B. MacCluer,  and A. Schuster, \emph{Composition operators on the Fock
space}, Acta Sci. Math. (Szeged), \textbf{69} (2003), 871--887.
\bibitem{Olivia} O. Constantin, \emph{Volterra type integration operators on Fock
spaces,} Proceeding of American Mathematical Society, 140 (2012), no. 12,  4247--4257.
\bibitem{ZZHH} Z. Cuckovi$\acute{\text{c}}$ and R. Zhao, \emph{Weighted composition operators between
different weighted  Bergman spaces and different Hardy spaces,}
Illinois Journal of mathematics, \textbf{51} (2007), no. 2,
479--498.
\bibitem{ZZH} Z. Cuckovi$\acute{\text{c}}$ and R. Zhao, \emph{Weighted composition operators on
the Bergman space,} J. London Math. Soc., \textbf{70} (2004),
499--511.
\bibitem{DGP} D. Garling and P. Wojtaszczyk, \emph{Some Bergman spaces of analytic functions,}
Proceedings of the Conference on Function spaces, Edwardsville,
Lecture Notes in Pure and Applied Mathematics, \textbf{172} (1995),
123--138.
\bibitem{HOLDW} F. Holland and D. Walsh, \emph{Hankel operators in von
Neumann--Schatten classes}, Illinois J. Math.,  \textbf{32} (1988),
1--22.
\bibitem{ZHXL} Z. Hu and X. Lv, \emph{Teoplitz operators from one Fock space to
another}, Integr. Equ. Oper. Theory, \textbf{70} (2011),  541--559.
\bibitem{LIS} S.  Li and S. Stevi$\acute{\text{c}}$, \emph{Generalized composition operators on Zygmund
spaces and Bloch type spaces}, J. Math. Anal. Appl., \textbf{338}
(2008), 1282--1295.
\bibitem{JIKZ} J. Isralowitz and  K. Zhu, \emph{Toeplitz operators on the Fock
space,} Integral Equations Operator Theory, \textbf{66}  (2010), no.
4,  593--611.
\bibitem{SLI} S. Li, \emph{Volterra composition operators
between weighted Bergman space and Block type spaces,}
 J. Korean Math. Soc., \textbf{45} (2008), 229--248.
  \bibitem{PP} M. Pavlovic and  J. Pel$\acute{\text{a}}$, \emph{An equivalence for weigted integrals of an
analytic function and its derivative,} Math. Nachr., \textbf{281}
(2008), 1612--1623.

\bibitem{Pom} C. Pommerenke, \emph{Schlichte Funktionen
und analytische Funktionen von beschrenkter mittlerer Oszillation,}
Commentarii Mathematici Helvetici,  \textbf{52} (1977), no.4  ,
 591--602.
\bibitem{Shapiro} J. Shapiro, \emph{The essential norm of a composition operator,}
  Annals of Math., \textbf{125} (1987), 375--404.
\bibitem{AS} A. Sharma,   \emph{Volterra composition operators beteween weighted Bergman--Nevanlinna and
Bloch-type spaces}, Demonstratio  Mathemtica, Vol. XLII, \textbf{3}
(2009).
 \bibitem{Si} A. Siskakis, \emph{Volterra operators on spaces of analytic functions- a
survey,} Proceedings of the first advanced course in operator theory
and complex analysis, 51--68, Univ. Sevilla Secr. Publ., Seville,
2006.
\bibitem{UEKI1} S. I.  Ueki, \emph{Weighted composition operator on the
Bergman--Fock spaces,} Int. J. Mod. Math., \textbf{3} (2008),
231--243.
\bibitem{UEKI} S. I. Ueki, \emph{Weighted composition operator on the
Fock space,} Proc. Amer. Math. Soc., \textbf{135 (5)} (2007),
1405--1410.
\bibitem{UEKI2} S. I.  Ueki, \emph{Weighted composition operator on some
function spaces of entire functions,} Bull. Belg. Math. So. Simon
Stevin, \textbf{17} (2010), 343--353.
\bibitem{XZHU} X. Zhu, \emph{Generalized composition operators
and Volterra composition operators on Bloch spaces in the unit
ball,}  Complex Variables and Elliptic Equations: An International
Journal, \textbf{54}\textbf{ (2)} (2009), 95--102.
\bibitem{KZ} K. Zhu, \emph{Operator theory in function spaces,}
Mathematical Surveys and Monographs, Vol \textbf{138}. Amer. Math.
Soc., Providence, RI., 2007.
\bibitem{EW} E. Wolf, \emph{Volterra composition operators between weighted Bergman
spaces and weighted Bloch type spaces,} Collectanea Mathematica,
\textbf{61} (2010), no. 1, 57--63.
\bibitem{DV} D. Vukoti$\acute{\text{c}}$, \emph{Pointwise multiplication operators between Bergman
spaces on simply connected domains}, Indiana Univ. Math. J.,
\textbf{48} (1999), 793--803.
\end{thebibliography}
\end{document}